\documentclass{amsart}
\usepackage{amscd,amsfonts,amsmath,amssymb,amsthm,curves,enumerate,graphicx,latexsym,hyperref}
\usepackage{bbm}
\usepackage{multirow}
\usepackage{mathrsfs}
\usepackage[mathscr]{eucal}
\usepackage{epsfig,epsf,epic}
\usepackage{epstopdf}

\DeclareMathOperator{\ev}{{ev}}
\DeclareMathOperator{\pt}{{pt}}

\def\bM{\overline M}

\def\bP{\mathbf{P}}
\def\bC{\mathbb{C}}

\def\bF{\mathbb{F}}
\def\bR{\mathbb{R}}
\def\bZ{\mathbb{Z}}

\def\cst{{\bC^*}}

\def\virt{^{\mathrm{vir}}}
\def\lra{\longrightarrow}

\newcommand{\conn}{\nabla}

\newcommand{\pairing}[2]{\left( #1\, , \, #2 \right)}

\newcommand{\integer}{\mathbb{Z}}
\newcommand{\rat}{\mathbb{Q}}
\newcommand{\real}{\mathbb{R}}
\newcommand{\cpx}{\mathbb{C}}

\newcommand{\conste}{\mathbf{e}}

\newcommand{\proj}{\mathbb{P}}

\newtheorem{prop}{Proposition}[section]
\newtheorem{defn}[prop]{Definition}
\newtheorem{lem}[prop]{Lemma}
\newtheorem{thm}[prop]{Theorem}
\newtheorem{cor}[prop]{Corollary}

\newtheorem{eg}[prop]{Example}
\newtheorem{remark}[prop]{Remark}

\begin{document}

\title[Open GW invariants and superpotentials]{Open Gromov-Witten invariants and superpotentials for semi-Fano toric surfaces}
\author[K. Chan]{Kwokwai Chan}
\address{Department of Mathematics\\ The Chinese University of Hong Kong\\ Shatin \\ Hong Kong}
\email{kwchan@math.cuhk.edu.hk}
\author[S.-C. Lau]{Siu-Cheong Lau}
\address{Department of Mathematics\\ Harvard University\\ One Oxford Street\\ Cambridge \\ MA 02138\\ USA}
\email{s.lau@math.harvard.edu}

\subjclass[2010]{Primary 53D45, 53D37; Secondary 14N35, 14J33, 53D12}
\keywords{Lagrangian Floer theory, open Gromov-Witten invariants, semi-Fano, toric surface, Landau-Ginzburg model, superpotential, quantum cohomology, Jacobian ring.}

\begin{abstract}
In this paper, we compute the open Gromov-Witten invariants for every compact toric surface $X$ which is semi-Fano (i.e. the anticanonical line bundle $K_X^{-1}$ is nef). Unlike the Fano case, this involves non-trivial obstructions in the corresponding moduli problem. As a consequence, an explicit formula for the Lagrangian Floer superpotential $W$ is obtained, which in turn gives an explicit presentation of the small quantum cohomology ring of $X$. We also provide a computational verification of the conjectural ring isomorphism between the small quantum cohomology of $X$ and the Jacobian ring of $W$.
\end{abstract}

\maketitle

\tableofcontents

\section{Introduction}

Let $X$ be a compact toric manifold of complex dimension $n$. The mirror for $X$ is given by a so-called {\em Landau-Ginzburg model} which consists of a noncompact complex $n$-dimensional manifold $\check{X}$ together with a holomorphic function $W:\check{X}\to\bC$ called the {\em superpotential}. From the perspective of the Strominger-Yau-Zaslow conjecture \cite{SYZ}, the manifold $\check{X}$, which is a bounded domain in $(\cst)^n$, is given by taking the fiberwise torus dual of the moment map restricted to the complement of toric divisors in $X$ \cite{A,CL}. Furthermore, as shown by the work of Cho-Oh \cite{CO} and Fukaya-Oh-Ohta-Ono \cite{FOOO,FOOO2}, the superpotential $W$ comes from Lagrangian Floer theory for the moment map fibers. More precisely, the coefficients of $W$ are generating functions of genus zero {\em open Gromov-Witten invariants} which are virtual counting of Maslov index two holomorphic stable disks bounded by the Lagrangian torus fibers of the moment map.

In this paper, we investigate the computation of open Gromov-Witten invariants and superpotentials for a class of toric surfaces. Similar problems have been studied by various authors. In \cite{CO}, Cho and Oh classified all non-singular holomorphic disks in a compact toric manifold $X$ with boundary lying in Lagrangian torus fibers. In case $X$ is Fano, since the moduli spaces of holomorphic stable disks do not contain any bubbling configurations, Cho-Oh's results imply that all open Gromov-Witten invariants are equal to one, and hence they obtained an explicit formula for the superpotential $W$, which agrees with the one predicted by Hori and Vafa \cite{HV}. For non-Fano toric manifolds, however, bubbling configurations {\em do} contribute to open Gromov-Witten invariants (Fukaya-Oh-Ohta-Ono \cite{FOOO,FOOO2}), so there are ``quantum corrections" to Hori-Vafa's formula for the superpotential. In this situation, the obstruction theory is non-trivial and this makes explicit computations of open Gromov-Witten invariants much more difficult than the Fano case.

There are very few known computations for non-Fano toric manifolds: In \cite{A2}, by using toric degenerations and studying the wall-crossing phenomenon for disk counting, Auroux was able to compute open Gromov-Witten invariants and write down explicitly the superpotentials for the Hirzebruch surfaces $\bF_2$ and $\bF_3$. Later, Fukaya, Oh, Ohta and Ono \cite{FOOO3}, again making use of toric degenerations, gave another proof for the example $\bF_2$. More recently, the first author \cite{C} established a formula relating open and closed Gromov-Witten invariants. Applying this formula, the open Gromov-Witten invariants for all toric Calabi-Yau surfaces and certain toric Calabi-Yau threefolds (including the total space of the canonical line bundles of any toric Del Pezzo surface) were computed in the joint works \cite{LLW,LLW2} of the second author with Leung and Wu.

The purpose of this paper is to compute all genus zero open Gromov-Witten invariants and hence obtain an explicit formula for the superpotential for any semi-Fano toric surface $X$. We call a compact toric surface {\em semi-Fano} if its anti-canonical bundle is nef (or equivalently, if every toric prime divisor has self-intersection at least $-2$). To state our main result, let $\mathbf{T}$ be a Lagrangian torus fiber of the moment map for a semi-Fano toric surface $X$. Let $b\in\pi_2(X,\mathbf{T})$ be a relative homotopy class of Maslov index two.
\begin{defn}
We call a Maslov index two class $b\in\pi_2(X,\mathbf{T})$ {\em admissible} if and only if $b$ is of the form
$$b=\beta+\sum_{k=-m}^n s_kD_k,$$
where
\begin{enumerate}[(1)]
\item $\beta\in\pi_2(X,\mathbf{T})$ is a class represented by a non-singular holomorphic disk $D^2\subset X$ with boundary $\partial D^2\subset\mathbf{T}$ which intersects a unique irreducible toric divisor $D_0$ with multiplicity one; such a class is called a {\em basic disk class};
\item $D_k$'s are toric prime divisors which form a chain of $(-2)$-curves in $X$;
\item $m,n$ are non-negative integers, and both $s_0\ge s_1\ge s_2\ge\cdots\ge s_n\ge 0$ and $s_0\ge s_{-1}\ge s_{-2}\ge\cdots \ge s_{-m}\ge 0$ are nonincreasing integer sequences with $|s_k-s_{k+1}|=0$ or $1$ for each $k$, and the last term of each sequence is not greater than one.
\end{enumerate}
\end{defn}

We can now state our main result:
\begin{thm}\label{thmGW}
Let $X$ be a compact semi-Fano toric surface. Let $b\in\pi_2(X,\mathbf{T})$ be a relative homotopy class of Maslov index two. Then the genus zero one-pointed open Gromov-Witten invariant $n_b$ is either one or zero according to whether $b$ is admissible or not. As a consequence, the superpotential for the mirror of $X$ is given explicitly by
$$W=\sum_{\substack{b \textrm{ admissible} \\ b \in \pi_2(X,\mathbf{T})}} Z_b,$$
where $Z_b$ is an explicit holomorphic function (in fact a monomial) on $\check{X}$ defined by Equation (\ref{Z_b}).
\end{thm}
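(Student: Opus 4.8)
The plan is to reduce the computation of the open invariant $n_b$ to a \emph{closed} Gromov--Witten invariant and then to evaluate the latter in a local model. The first step is structural. By the Cho--Oh classification of holomorphic disks in toric manifolds \cite{CO}, together with additivity of the Maslov index, every genus zero holomorphic stable disk bounded by $\mathbf{T}$ in a Maslov index two class consists of exactly one disk component in a basic class $\beta$ with a configuration of holomorphic spheres attached, whose total class $\alpha\in H_2(X;\bZ)$ is effective with $c_1(\alpha)=0$. Since $X$ is a semi-Fano surface, every toric prime divisor has self-intersection at least $-2$, so the only extremal effective curve classes with vanishing first Chern number are the toric $(-2)$-curves; hence $\alpha$ is a non-negative integral combination of them, and stability together with connectedness of the configuration --- which must meet $\partial D^2$ at a single point of the divisor $D_0$ hit by $\beta$ --- forces these $(-2)$-curves to form a chain through $D_0$ and constrains the coefficient vector. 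The admissibility conditions on $(m,n,(s_k))$ are exactly the combinatorial record of when such a stable-disk configuration exists (and, more precisely, of when the ensuing virtual count does not vanish): for a Maslov two class failing them, either $\mathcal M_1(b)=\varnothing$ or $n_b$ is seen to be zero in the computation below. In either case $n_b=0$, and it remains to treat admissible $b$.

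Because the relevant moduli space carries a non-trivial obstruction, I would not compute $n_b$ directly from its Kuranishi structure but instead invoke the open/closed comparison formula of the first author \cite{C}. It expresses $n_b$, for $b=\beta+\alpha$ with $\alpha=\sum_k s_k D_k$, as a genus zero one-pointed closed Gromov--Witten invariant with a point constraint, taken on an auxiliary compact toric manifold canonically built from the fan of $X$ and the ray of $D_0$. By the projection formula this invariant depends only on the data of $\alpha$ together with the divisor $D_0$, not on the remainder of $X$; and since $\alpha$ is supported on a chain of $(-2)$-curves, a neighbourhood of that chain is a resolution of an $A_\ell$ surface singularity, so the invariant is \emph{local} --- it equals the same invariant computed in the standard (equivariant) local model attached to the chain.

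The heart of the argument is the evaluation of this local invariant. I would compute it by an explicit toric degeneration that peels off the chain one $(-2)$-curve at a time, reducing to the basic building block --- the Hirzebruch surface $\bF_2$ and its iterated blow-ups, where the disk and curve counts were determined by Auroux \cite{A2} and by Fukaya--Oh--Ohta--Ono \cite{FOOO3} --- or, equivalently, by torus localization applied to the Euler class of the obstruction bundle over the compactified space of configurations ``basic disk $+$ genus zero cover of the $(-2)$-chain''. The expected outcome is that for every admissible profile the invariant equals $1$, while for a profile violating the admissibility inequalities the relevant closed class is not effective in the local model (or the obstruction integral vanishes), giving $0$. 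The main obstacle is precisely here: showing that the obstructed contribution is \emph{exactly} $1$ --- not merely nonzero --- uniformly in the length of the chain and in the admissible coefficients. This will require either a clean inductive organization of the degenerations, with the admissibility inequalities guaranteeing that one never leaves the semi-Fano world and that no spurious boundary strata contribute, or a uniform identification of the Euler class of the obstruction bundle on these moduli spaces.

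Granting the computation of all $n_b$, the superpotential formula is immediate. By the Cho--Oh and Fukaya--Oh--Ohta--Ono description of the Lagrangian Floer potential function of $\mathbf{T}$, one has $W=\sum_b n_b\,Z_b$, the sum running over all Maslov index two classes $b\in\pi_2(X,\mathbf{T})$, with $Z_b$ the monomial weight of Equation (\ref{Z_b}). Substituting $n_b\in\{0,1\}$, with $n_b=1$ precisely for admissible $b$, yields $W=\sum_{b\ \mathrm{admissible}} Z_b$.
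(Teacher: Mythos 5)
Your overall strategy coincides with the paper's: reduce $n_b$ via the open/closed comparison formula of \cite{C} (and \cite{LLW}) to a one-pointed closed invariant $GW^{Y,\alpha+f}_{0,1}([\pt])$ on a toric modification $Y$ of $X$ obtained by adding the ray $-v_i$, and then observe that the answer is governed by a local computation on the chain of $(-2)$-curves. Up to that point your outline is sound.

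The genuine gap is at the step you yourself flag as ``the main obstacle'': the actual evaluation of the local invariant. You propose to do this by a toric degeneration peeling off one $(-2)$-curve at a time, or by localization against the Euler class of the obstruction bundle, but you do not carry either out, and this is precisely where all the content of the theorem lives --- the uniform answer $1$ for every admissible profile and $0$ otherwise is not something that falls out of effectivity considerations (the class $\alpha+f$ is effective for \emph{any} non-negative coefficient vector $s_k$, so non-admissible profiles vanish for obstruction-theoretic reasons, not for lack of curves). The paper closes this gap with two specific inputs you are missing. First, it applies the results of Hu and Gathmann on Gromov--Witten invariants of point blow-ups to trade the point constraint for a $(-1)$-curve: $GW^{Y,\alpha+f}_{0,1}([\pt]) = GW^{\tilde Y,\alpha+f'}_{0,0}$, where $\tilde Y$ is the blow-up of $Y$ at a generic point and $f'$ is the strict transform of the fiber class. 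Second, the resulting configuration --- a chain of $(-2)$-curves with a single $(-1)$-curve attached to one component --- is exactly the configuration $L(n)$ whose local genus zero invariants were computed by Bryan and Leung \cite{BL}: the invariant of $L_*+\sum_k s_k L_k$ equals $1$ if $\{s_k\}$ is admissible with center $0$ and $0$ otherwise. This theorem is the external input that delivers the dichotomy in the statement; without invoking it (or reproving it), your argument establishes the reduction but not the count. A secondary, smaller point: before applying \cite{C} one must check that the desingularization of the fan after adding $-v_i$ does not disturb the $(-2)$-chain (the new rays are not adjacent to any $v_k$ in the chain), which the paper verifies using Proposition \ref{minus}; your appeal to a ``projection formula'' does not address this.
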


The proof of Theorem \ref{thmGW} is based on the comparison between open and closed Gromov-Witten invariants in \cite{C} (and its generalization in \cite{LLW}) and the computation on local Gromov-Witten invariants obtained by Bryan and Leung \cite{BL}. The idea is similar to the proof of Theorem 4.2 in \cite{LLW2}.

As a consequence of Theorem \ref{thmGW}, we can write down an explicit formula for the superpotential for a semi-Fano toric surface (see the tables in Appendix \ref{table}). We apply this to verify the conjectural ring isomorphism between the small quantum cohomology $QH^*(X)$ of a semi-Fano toric surface $X$ and the Jacobian ring $Jac(W)$ of its superpotential $W$ via direct computations.
\begin{cor}\label{thmQC=JAC}
Let $X$ be a compact semi-Fano toric surface, and $W$ its superpotential. Then there is a natural ring isomorphism
\begin{equation} \label{QH=JAC}
QH^*(X)\cong \mathrm{Jac}(W).
\end{equation}
\end{cor}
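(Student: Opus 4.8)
The plan is to prove Corollary~\ref{thmQC=JAC} by exploiting the explicit formula for $W$ furnished by Theorem~\ref{thmGW}. Since there is no known proof of the isomorphism $QH^*(X)\cong\mathrm{Jac}(W)$ for arbitrary non-Fano toric manifolds, the strategy here is necessarily case-by-case: one classifies all compact semi-Fano toric surfaces up to the relevant equivalence, writes down $W$ for each from the tables in Appendix~\ref{table}, computes $\mathrm{Jac}(W)=\bC[z_1^{\pm},z_2^{\pm}]/(\partial_{z_1}W,\partial_{z_2}W)$ explicitly, and compares it with the known Batyrev-type presentation of $QH^*(X)$. First I would recall that a compact semi-Fano toric surface is obtained from a minimal one ($\bP^2$, $\bP^1\times\bP^1$, or a Hirzebruch surface $\bF_a$) by a sequence of blow-ups at torus-fixed points that only create curves of self-intersection $\ge -2$; up to isomorphism and deformation this yields a finite list (the del Pezzo surfaces of degree $\ge 1$ together with the surfaces carrying chains of $(-2)$-curves, such as $\bF_2$ and its allowed blow-ups), so the verification reduces to finitely many computations.

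The core of each computation proceeds as follows. On the quantum side, the small quantum cohomology of a semi-Fano toric surface has a Batyrev-style presentation $QH^*(X)\cong\bC[D_1,\dots,D_N][q_1^{\pm},\dots,q_r^{\pm}]/(\mathcal{L}+\mathcal{SR}_q)$, where $\mathcal{L}$ is the ideal of linear relations coming from $H_2(X;\bZ)$ and $\mathcal{SR}_q$ is the quantized Stanley-Reisner ideal; for surfaces the quantum corrections to the Stanley-Reisner relations are mild and can be read off from the fan. On the mirror side, writing the toric coordinates on $\check X$ and substituting the monomials $Z_b$ of Theorem~\ref{thmGW}, the partial derivatives $\partial W/\partial z_i$ generate $\mathrm{Jac}(W)$. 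The identification is then effected by matching the divisor classes $D_k$ with the corresponding monomial summands $Z_{\beta_k}$ (the basic-disk contributions) of $W$: the logarithmic derivative relations $z_i\,\partial_{z_i}W=\sum_b \langle v_b, e_i\rangle Z_b$ reproduce exactly the linear relations $\mathcal{L}$, while the product relations among the $D_k$ in $\mathcal{SR}_q$ are matched against polynomial relations among the $Z_b$ forced by the monomial identities $Z_{b+b'}=Z_b Z_{b'}/Z_{\mathbf{T}}$-type multiplicativity. One checks in each case that the two quotient rings have the same dimension over $\bC$ (namely $\dim_\bC H^*(X)=2+N-2=N$, the number of $1$-dimensional cones, since $\chi(X)=N$) and that the explicit generator-and-relation matching is a well-defined ring homomorphism, hence an isomorphism.

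I expect the main obstacle to be the bookkeeping in the cases with $(-2)$-curve chains, where the admissible classes of Theorem~\ref{thmGW} contribute infinitely many monomials $Z_b$ to $W$ but these resum into a closed rational-function form; one must verify that after this resummation the critical-point equations are genuinely polynomial relations matching $\mathcal{SR}_q$, and in particular that no spurious components appear in $\mathrm{Spec}\,\mathrm{Jac}(W)$ and that $\mathrm{Jac}(W)$ remains finite-dimensional of the correct rank. A secondary subtlety is fixing the correct identification of Kähler/Novikov parameters (the $q_i$) on the two sides, including possible mirror-map corrections; for semi-Fano surfaces I anticipate the mirror map is trivial (or a simple monomial change of variables), but this must be checked so that the ring isomorphism is the \emph{natural} one intertwining the gradings and the parameters. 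Once these points are settled for each surface on the finite list, assembling them gives Corollary~\ref{thmQC=JAC}.
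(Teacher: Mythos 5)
Your overall architecture (finite classification of the surfaces, Siebert--Tian/Batyrev presentation of $QH^*(X)$, explicit case-by-case matching, dimension count) coincides with the paper's, but the map you propose to use is wrong, and this is precisely the point where the semi-Fano case differs from the Fano case. You identify $D_k$ with the basic-disk monomial $Z_{\beta_k}$. That is the Cho--Oh/Fano map, and for a non-Fano semi-Fano surface it is not even well defined on $QH^*(X)$: the linear relations $\sum_i v_i^j D_i=0$ must be sent into the Jacobian ideal, i.e.\ to the partials $\partial_j W$, but $\sum_i v_i^j Z_{\beta_i}=\partial_j W_0\neq \partial_j W$ as soon as some invariant $n_{\beta_i+\alpha}$ with $\alpha\neq 0$ is nonzero. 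The essential ingredient in the paper is the quantum-corrected map of Fukaya--Oh--Ohta--Ono, $\psi(D_i)=\sum_{\beta:\,\mu(\beta)=2}n(\beta;D_i)Z_\beta$, which by the divisor equation (Proposition~\ref{divisor}) equals $\sum_\beta (D_i\cdot\beta)\,n_\beta Z_\beta$; in the paper's running example this gives $\psi(D_1)=(1-q_1)z_1$ rather than $z_1$. It is exactly Theorem~\ref{thmGW} that makes these corrected images computable, and the verification that $\psi(\mathcal{L})\subset\langle\partial_1W,\partial_2W\rangle$ and $\psi(\mathcal{SR}_Q)=\{0\}$ is then carried out with them; with your uncorrected assignment the matching fails already at the linear relations.

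Two smaller points. First, the admissibility conditions of Theorem~\ref{thmGW} force the end terms of each sequence to be at most $1$ and the $(-2)$-chains are finite, so only finitely many classes are admissible: $W$ is a Laurent polynomial, and the ``resummation of infinitely many monomials into a rational function'' that you anticipate as the main obstacle does not occur. Second, for bijectivity the paper does not verify $\dim_\bC\mathrm{Jac}(W)=\dim_\bC H^*(X)$ by hand in each case; it quotes Iritani's results (valid for generic K\"ahler parameters) for the dimension equality and then only needs surjectivity of $\psi$, which is checked by exhibiting explicit preimages of $z_1^{\pm1}$ and $z_2^{\pm1}$.
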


In a recent work \cite{FOOO4}, Fukaya, Oh, Ohta and Ono proved a much stronger result than Corollary \ref{thmQC=JAC}: For every compact toric manifold $X$ and $\mathbf{b} \in H_*(X)$, there is a ring isomorphism
$$QH^*_{\mathbf{b}} (X) \cong \mathrm{Jac}(W_{\mathbf{b}}) $$
where $QH^*_{\mathbf{b}}(X)$ is the {\em big} quantum cohomology ring and $W_{\mathbf{b}}$ is the superpotential bulk-deformed by $\mathbf{b}$. We remark that their proof uses their big machinery of Lagrangian Floer theory and does not involve explicit computations of open Gromov-Witten invariants.

On the other hand, via the isomorphism (\ref{QH=JAC}), our explicit formula for the supepotential $W$ leads to an explicit presentation of the small quantum cohomology ring $QH^*(X)$ for a semi-Fano toric surface $X$. Indeed we can achieve more:
\begin{cor} \label{bulk_cor}
Let $X$ be a compact semi-Fano toric surface and $\mathbf{b} = D + a X$ be a linear combination of toric cycles, where $D$ is a toric divisor and $a \in \cpx$.  Then the bulk-deformed superpotential is
$$W_{\mathbf{b}} = a + \sum_{\beta \textrm{ admissible}} \exp(\langle \beta, D \rangle) Z_{\beta}.$$
\end{cor}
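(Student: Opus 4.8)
The plan is to reduce $W_{\mathbf{b}}$ to the undeformed open Gromov-Witten invariants $n_b$ of Theorem~\ref{thmGW} by means of the divisor and fundamental-class axioms for the bulk-deformed potential. Recall from \cite{FOOO4} that the coefficients of $W_{\mathbf{b}}$ are bulk-deformed invariants $n_b^{\mathbf{b}}=\sum_{\ell\ge 0}\frac{1}{\ell!}\,n_{b,\ell}(\mathbf{b},\dots,\mathbf{b})$, where $n_{b,\ell}$ virtually counts Maslov index two stable disks in the class $b$ with one boundary marked point constrained to a point of $\mathbf{T}$ and $\ell$ interior marked points sent to the cycle $\mathbf{b}$. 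Writing $\mathbf{b}=D+aX$ with $X=[X]=\mathrm{PD}(\mathbf{1})$ and expanding each $\mathbf{b}^{\otimes\ell}$ multilinearly, every interior marked point gets assigned either to the toric divisor $D$ or to the fundamental cycle $X$, so it is enough to understand these two kinds of insertions.

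First I would handle the insertions along $X$. Since $[X]=\mathrm{PD}(\mathbf{1})$ places no constraint on an interior marked point, the fundamental-class (unit) axiom of \cite{FOOO4} applies: forgetting such a point writes the contribution as a fibre integral of a form pulled back from a moduli space with one fewer interior marked point, over a positive-dimensional fibre, and this vanishes unless the disk class is $0$. The only surviving contribution is therefore the constant class $b=0$ carrying exactly one interior marked point on $[X]$ and one boundary marked point, whose moduli space is a copy of $\mathbf{T}$ with boundary evaluation the identity; by the normalisation fixed in \cite{FOOO4} this contributes precisely the constant $a$ to $W_{\mathbf{b}}$. No further contribution of $b=0$ occurs: additional interior points on $[X]$ vanish by the same axiom, and interior points on $D$ contribute a factor $0\cdot D=0$ or vanish because $\mathbf{T}$ is disjoint from every toric divisor.

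Next I would handle the insertions along $D$ for a class $b\neq0$. Each interior marked point constrained to $D$ produces, via the interior-marked-point divisor equation --- the disk analogue of the closed-string divisor axiom; see \cite{FOOO4}, or reduce to the closed case through the open/closed comparison of \cite{C,LLW} and the local invariants of Bryan-Leung \cite{BL} already used in Theorem~\ref{thmGW} --- a factor equal to the intersection number $\langle b,D\rangle=b\cdot D$. Summing over the number $\ell$ of such insertions with weight $1/\ell!$ gives $n_b^{\mathbf{b}}=\exp(\langle b,D\rangle)\,n_b$, while the $[X]$-insertions for $b\neq0$ contribute nothing by the previous paragraph. Since $n_b=1$ for $b$ admissible and $n_b=0$ otherwise (Theorem~\ref{thmGW}), the factor $\exp(\langle b,D\rangle)$ cannot revive a vanishing term, so only admissible classes survive and
\begin{equation*}
W_{\mathbf{b}}=a+\sum_{\beta \textrm{ admissible}}\exp(\langle\beta,D\rangle)\,Z_\beta .
\end{equation*}

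The step I expect to be the main obstacle is the rigorous justification, with the correct signs and normalisations, of the divisor and unit axioms for these (obstructed, non-Fano) bulk-deformed open Gromov-Witten invariants: in particular that the fundamental cycle produces an \emph{additive} constant $a$ rather than a multiplicative factor, and that no stray contributions from constant or bubbled disks are missed. I would handle this either by invoking the bulk-deformed $A_\infty$ machinery of \cite{FOOO4} directly, or, consistently with the proof of Theorem~\ref{thmGW}, by passing through the open/closed comparison of \cite{C,LLW} and applying the ordinary divisor axiom to the closed (local) invariants of \cite{BL}. A secondary point needing care is matching the combinatorial exponent $\langle\beta,D\rangle$ in the statement with the honest intersection pairing $\beta\cdot D$; this is a direct computation from Cho-Oh's description \cite{CO} of basic disk classes together with the fact that the $(-2)$-curve summands $D_k$ of an admissible class meet $\mathbf{T}$ trivially.
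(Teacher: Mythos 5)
Your proposal is correct and follows essentially the same route as the paper's proof of Proposition \ref{bulk}: the fundamental-class insertions are killed for $\beta\neq 0$ by dimension reasons (leaving only the constant disk class with a single interior marked point, which contributes the additive constant $a$), the divisor insertions are converted to factors of $\langle\beta,D\rangle$ by the open divisor equation (Proposition \ref{divisor}) and resummed into $\exp(\langle\beta,D\rangle)$, and Theorem \ref{thmGW} then restricts the sum to admissible classes. The only difference is cosmetic: the paper proves the slightly more general statement with an additional point insertion $cp$ and sets $c=0$ at the end.
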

Then by using the results of Fukaya, Oh, Ohta and Ono mentioned above, an explicit ring presentation of $QH^*_{\mathbf{b}}(X)$ can be obtained for $\mathbf{b}\in H_2(X)\oplus H_4(X)$.

\begin{remark}
Fukaya-Oh-Ohta-Ono \cite{FOOO, FOOO2, FOOO4} used a Novikov ring instead of $\cpx$ as the coefficient ring, which is more appropriate in general.  Throughout this paper we stick to the tradition of using $\cpx$ as the coefficient ring because it turns out that the superpotential $W$, which is a priori a formal power series, is a finite sum for any toric semi-Fano surface $X$. All the statements in this paper remains unchanged if $\cpx$ is replaced by a Novikov ring.
\end{remark}

The rest of this paper is arranged as follows. Section \ref{Review} is a brief review on toric manifolds and their Landau-Ginzburg mirrors. In Section \ref{sec_GW} we compute the open Gromov-Witten invariants for compact semi-Fano toric surfaces and prove Theorem \ref{thmGW}. In Section \ref{qc=Jac}, we outline our computational proof of the isomorphism $QH^*(X)\cong Jac(W)$ and demonstrate the explicit calculations by several examples. Corollary \ref{bulk_cor} is proved in Section \ref{comments}. We end by some further discussions on bulk-deformation by points.

\section*{Acknowledgment}

We are heavily indebted to Conan Leung. He not only suggested this problem to us, but also allowed us to freely use his ideas throughout this paper. We are also grateful to Baosen Wu for numerous inspiring discussions and sharing many of his insights. We would also like to thank Kenji Fukaya, Mark Gross and Yong-Geun Oh for their useful comments on open Gromov-Witten invariants with interior marked points, and an anonymous referee for some very useful comments which help improve the exposition of this paper a lot.

Part of this work was done in the IMS of the Chinese University of Hong Kong, and when the first and second authors were visiting the IH\'{E}S and University of Wisconsin-Madison respectively. The authors would like to thank these institutes for hospitality and providing an excellent research environment. The work of K. Chan described in this paper was substantially supported by a grant from the Research Grants Council of the Hong Kong Special Administrative Region, China (Project No. CUHK404412).

\section{Landau-Ginzburg models as mirrors for toric manifolds}\label{Review}

The purpose of this section is to set up some notations and give a review on certain basic facts in toric geometry and mirror symmetry for toric manifolds that we will need in this paper.

\subsection{A quick review on toric manifolds}
Let $N\cong\integer^n$ be a lattice of rank $n$. For simplicity we will always use the notation $N_R:=N\otimes R$ for a $\integer$-module $R$. Let $X_\Sigma$ be a compact complex toric $n$-fold $X_\Sigma$ defined by a fan $\Sigma$ supported in $N_\real$. $X_\Sigma$ admits an action by the complex torus $N_\cpx/N\cong(\cpx^\times)^n$, whence its name ``toric manifold". There is an open orbit in $X_\Sigma$ on which $N_\cpx/N$ acts freely, and by abuse of notation we shall also denote this orbit by $N_\cpx/N\subset X_\Sigma$.

We denote by $M$ the dual lattice of $N$. Every lattice point $\nu \in M$ gives a nowhere-zero holomorphic function $\exp\pairing{\nu}{\cdot}:N_\cpx/N\to\cpx$ which extends to a meromorphic function on $X_\Sigma$. Its zero and pole sets define a toric divisor which is linearly equivalent to the zero divisor. (By a toric divisor $X_\Sigma$ we mean a divisor $D\subset X$ which is invariant under the action of $N_\cpx/N$.)

If we further equip $X_\Sigma$ with a toric K\"ahler form $\omega$, then the action of $N_\real/N$ on $X_\Sigma$ induces a moment map
$$\mu_0:X_{\Sigma}\to M_\real,$$
whose image is a polytope $P\subset M_\real$ defined by a system of inequalities
$$\pairing{v_i}{\cdot}\geq c_i,\ i=1,\ldots,d,$$
where $v_i$ are all primitive generators of rays of $\Sigma$, and $c_i\in\real$ are some suitable constants.

$P$ admits a natural stratification by its faces. Each codimension-one face $T_i\subset P$ which is normal to $v_i\in N$ gives an irreducible toric divisor $D_i=\mu_0^{-1}(T_i)\subset X_\Sigma$ for $i=1,\ldots,d$, and all other toric divisors are generated by $\{D_i\}_{i=1}^d$.  For example, the anti-canonical divisor of $X_\Sigma$ is given by $\sum_{i=1}^d D_i$.

\subsection{Gromov-Witten invariants}
First we recall the definition of closed Gromov-Witten invariants for a projective manifold. Let $\beta\in H_2(X,\bZ)$ be a $2$-cycle in a smooth projective variety $X$. Let $\bM_{g,k}(X,\beta)$ be the moduli space of stable maps
$$f:(C;x_1,\cdots x_k)\lra X,$$
where $C$ is a genus $g$ nodal curve with $k$ marked points and $f_*[C]=\beta$. Let $\ev_i:\bM_{g,k}(X,\beta)\to X$ ($i=1,\ldots,k$) be the evaluation maps $f\mapsto f(x_i)$.
\begin{defn}
Given cohomology classes $\gamma_i\in H^*(X)$, $1\le i\le k$, the closed Gromov-Witten invariant $GW^{X,\beta}_{g,k}(\gamma_1,\cdots,\gamma_k)$ is defined by
\[GW^{X,\beta}_{g,k}(\gamma_1,\cdots,\gamma_k):=\int_{[\bM_{g,k}(X,\beta)]^{\virt}}
\prod_{i=1}^k\ev_i^*(\gamma_i),\]
where $[\bM_{g,k}(X,\beta)]^{\virt}$ denotes the virtual fundamental class of the moduli space $\bM_{g,k}(X,\beta)$.
\end{defn}


For toric manifolds, Fukaya-Oh-Ohta-Ono \cite{FOOO} defined open Gromov-Witten invariants as follows. Let $X=X_\Sigma$ be a toric manifold defined by a fan $\Sigma$. For a moment map Lagrangian torus fiber $\mathbf{T}\subset X$, let $\pi_2(X,\mathbf{T})$ be the group of homotopy classes of maps
$$u:(\Delta,\partial\Delta)\lra(X,\mathbf{T})$$
where $\Delta:=\{z\in\cpx:|z|\leq1\}$ denotes the standard closed unit disk in $\cpx$. Then $\pi_2(X,\mathbf{T})$ is generated by the {\em basic disk classes} $\beta_i\in\pi_2(X,\mathbf{T})$ which correspond to the primitive generators $v_i\in N$ of rays in $\Sigma$ for $i=1,\ldots,d$. The two most important classical symplectic invariants associated to $\beta\in\pi_2(X,\mathbf{T})$ are its symplectic area $\int_\beta\omega$ and its Maslov index $\mu(\beta)$.

Now for $\beta\in\pi_2(X,\mathbf{T})$, let $\bM_k(\mathbf{T},\beta)$ be the moduli space of stable maps from a bordered Riemann surface of genus zero with $k$ boundary marked points respecting the cyclic order of the boundary in the class $\beta$. Notice that the bordered Riemann surface could have disk or sphere bubbles. It is known that $\bM_k(\mathbf{T},\beta)$ has expected dimension $n+\mu(\beta)+k-3$. Let $[\bM_k(\mathbf{T},\beta)]^{\virt}$ be its virtual fundamental chain constructed in \cite{FOOO}. We let
$$\mathrm{ev}_i:\bM_k(\mathbf{T},\beta)\lra\mathbf{T}$$
be the evaluation maps defined by $\mathrm{ev}_i([u;p_0,\ldots,p_{k-1}])=u(p_i)$ for $0\le i\le k-1$.

Consider the case $k=1$ and $\mu(\beta)=2$. Note that the virtual dimension of $\bM_1(\mathbf{T},\beta)$ is equal to $\dim\mathbf{T}=n$ if and only if $\mu(\beta)=2$. Since the minimal Maslov index is two, the virtual fundamental chain $[\bM_k(\mathbf{T},\beta)]^{\virt}$ becomes a cycle when $\mu(\beta)=2$. Hence we can define:
\begin{defn}[\cite{FOOO}] \label{one-pt_openGW}
Given a Lagrangian torus fiber $\mathbf{T}\subset X$ and $\beta\in\pi_2(X,\mathbf{T})$, the genus zero one-pointed open Gromov-Witten invariant $n_\beta$ is defined as
$$n_\beta:=\mathrm{ev}_{0*}([\bM_1(\mathbf{T}, \beta)]^{\virt})\in H_n(\mathbf{T};\rat)\cong\rat.$$
\end{defn}
It was shown in \cite{FOOO} that the number $n_\beta$ is independent of the perturbations used to define the virtual fundamental cycle and hence the above indeed defines an invariant. One should view the invariant $n_\beta\in\rat$ as the virtual number of holomorphic stable disks representing the class $\beta$ such that their boundaries pass through a fixed generic point in $\mathbf{T}$.

Let us consider the situation where $X=X_\Sigma$ is semi-Fano, i.e. with nef anti-canonical line bundle. By the classification result of Cho-Oh \cite{CO}, a class $\beta\in\pi_2(X,\mathbf{T})$ represented by a stable disk must be of the form $\beta=\beta'+\alpha$, where $\beta'$ is a disk class represented by sum of holomorphic disks and $\alpha\in H_2(X)$ is represented by sum of rational curves. The Maslov index of $\beta'$ is $2k$ where $k$ is the intersection number of $\beta'$ with the toric anti-canonical divisor, and the first Chern number $c_1(\alpha):=\int_\alpha c_1(X)$ of $\alpha$ must be non-negative since $X$ is semi-Fano. This shows that any holomorphic stable disk with Maslov index two must be of the form $\beta_i+\alpha$ where $\beta_i$ is a basic disk class and $\alpha\in H_2(X)$ is an effective curve class with first Chern number $c_1(\alpha)=0$.

\subsection{The LG mirror of toric manifolds}
The mirror of a toric manifold $X= X_{\Sigma}$ is a Landau-Ginzburg model $(\check{X},W)$, which consists of a noncompact complex manifold $\check{X}$ together with a holomorphic function $W:\check{X}\to\bC$ called the superpotential. From the perspective of Lagrangian Floer theory, the superpotential $W$ comes from the boundary-deformed Floer potential for Lagrangian torus fibers, and can be written down in terms of K\"ahler parameters and open Gromov-Witten invariants of $X$ \cite{CO,FOOO,FOOO2,A}. The following is a brief review of this procedure from the SYZ viewpoint. See \cite{CL} for more details.

First of all, we recall that the {\em semi-flat mirror} of $X$ is
$$\check{X}_0:=\big\{(\mathbf{T}_r,\conn):r\in P^{\mathrm{int}},\conn\textrm{ is a flat $U(1)$-connection on $\mathbf{T}_r$}\big\},$$
where $\mathbf{T}_r\subset X$ denotes the moment-map fiber over $r$ and $P^{\mathrm{int}}$ denotes the interior of $P$. It is well known that $\check{X}_0$ can be equipped with the so-called semi-flat complex structure, making it into a complex manifold \cite{L}. In this toric case, $\check{X}_0$ is simply $P^{\mathrm{int}}\times M_\real/M$ equipped with the standard complex structure.

Let $\Lambda^*$ be the lattice bundle over $B_0$ whose fiber at $r\in P^{\mathrm{int}}$ is $\Lambda^*_r=\pi_1(\mathbf{T}_r)$.  For each $\lambda\in\Lambda^*$, we may consider the following weighted count of stable holomorphic disks:
$$\mathcal{F}(\lambda):=\sum_{\partial\beta=\lambda}n_\beta\exp\left(-\int_\beta\omega\right).$$
This defines a function $\mathcal{F}:\Lambda^*\to\real$. Applying fiberwise Fourier transform on $\mathcal{F}$, we obtain the superpotential
\begin{align*}
W:\check{X}_0&\to\cpx, \\
W(\mathbf{T}_r,\conn)&=\sum_{\beta\in\pi_2(X,\mathbf{T}_r)}n_\beta\exp\left(-\int_\beta\omega\right)
\textrm{Hol}_\conn(\partial\beta).
\end{align*}
Notice that the above expression can be an infinite series. Nevertheless we will see that for semi-Fano toric surfaces, this is just a finite sum and hence there are no convergence issues for all the examples we consider in this paper. In general, assuming convergence, $W$ is a holomorphic function on $\check{X}_0$.

For $\beta\in\pi_2(X,\mathbf{T}_r)$, we define a function $Z_\beta:\check{X}_0\to\cpx$ by
\begin{equation} \label{Z_b}
Z_\beta(\mathbf{T}_r,\conn):=\exp\left(-\int_\beta\omega\right)\textrm{Hol}_\conn(\partial\beta),
\end{equation}
so that the superpotential can be written in the form $W=\sum_{\beta\in\pi_2(X,T_r)}n_\beta Z_\beta$. Note that $Z_\beta$ is holomorphic and in fact it is a monomial in terms of the standard coordinates on $M_{\bC^*}$.

It is already known by Cho-Oh \cite{CO} that $n_{\beta_i}=1$ for all the basic disk classes $\beta_i$. When $X$ is semi-Fano, as we have seen above, the moduli space $\bM_1(\mathbf{T},\beta)$ is non-empty only when $\beta=\beta_i+\alpha$ for some $i=1,\ldots,d$ and $\alpha\in H_2(X)$ represented by a rational curve of Chern number zero. Thus we may write
$$W=W_0+\sum_{i=1}^d\sum_{\alpha\neq0,c_1(\alpha)=0}n_{\beta_i+\alpha}Z_{\beta_i+\alpha},$$
where $W_0=\sum_{i=1}^d Z_{\beta_i}$. In general it is very hard to compute $n_{\beta_i+\alpha}$ starting from the definition. In the following section, we will give a method to compute these invariants when $X$ is a semi-Fano toric surface.


\section{Disk counting and GW invariants}\label{sec_GW}

\subsection{A fact on toric surfaces}

In this subsection we discuss some elementary results on toric surfaces, which will be needed in the proof of Theorem \ref{thmGW}. These are probably well-known facts among experts; but for convenience of the reader, we include their proofs here.

We start with the well-known formula for the self-intersection number of a toric prime divisor in a compact toric surface. Let $X=X_\Sigma$ be a smooth toric surface defined by a fan $\Sigma$ in $\bZ^2$. Suppose $D\subset X$ is a compact toric prime divisor.  Then $D$ corresponds to a ray $\tau\in\Sigma$, so that $\tau=\sigma^-\cap\sigma^+$ for two $2$-dimensional cones $\sigma^-,\sigma^+\in\Sigma$. (See Figure \ref{cone}).

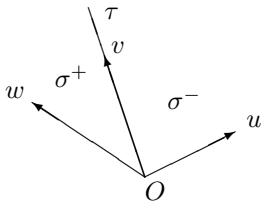
\begin{figure}[h!]
\begin{center}
\setlength{\unitlength}{3mm}
\begin{picture}(30,7)(-15,0)
\linethickness{0.075mm}
\put(0,0){\vector(-1,3){1.8}}\put(0,0){\line(-1,3){2.5}}
\put(0,0){\vector(-3,2){5}}
\put(0,0){\vector(2,1){4}}
\put(4.5,2.2){$u$}
\put(-1.5,5.5){$v$}
\put(-6.2,3.5){$w$}
\put(-1.8,7){$\tau$}
\put(-4,4){$\sigma^+$}
\put(1,3){$\sigma^-$}
\put(0,-1){$O$}
\end{picture}
\end{center}
\caption{Cones corresponding to a compact divisor.}\label{cone}
\end{figure}

Let $\tau$ be generated by $v\in \bZ^2$, $\sigma^-$ be generated by $u,v$ and $\sigma^+$ be generated by $v,w$ such that $u,v,w$ are placed in a counterclockwise fashion. Then the self-intersection of $D$ is given by
\[ D^2=-\left|
\begin{matrix}
u_1 & w_1\\
u_2 & w_2
\end{matrix}
\right|\]
where
\[ u=\binom{u_1}{u_2}\quad\text{and}\quad w=\binom{w_1}{w_2}.\]

\begin{prop}\label{minus}
Let $D=\cup_{i=1}^l D_i$ be a connected union of compact toric prime divisors with $D_i^2=-2$, and $\tau_i$ be the ray corresponding to $D_i$. Suppose $\sigma_i\in\Sigma$ are $2$-dimensional cones so that $\tau_i=\sigma_{i-1}\cap\sigma_i$. Then the cone $\cup_{i=0}^n\sigma_i$ is strictly convex.
\end{prop}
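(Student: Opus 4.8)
The plan is to turn the hypothesis $D_i^2=-2$ into a linear recursion among the primitive ray generators, solve it, and then read off strict convexity by elementary plane geometry. Write the chain as $D_1,\dots,D_l$ (so $D=\cup_{i=1}^{l}D_i$), let $v_i\in\bZ^2$ be the primitive generator of the ray $\tau_i$ corresponding to $D_i$, and let $\sigma_0,\dots,\sigma_l$ be the $2$-dimensional cones with $\tau_i=\sigma_{i-1}\cap\sigma_i$. Since each $\sigma_i$ is a smooth $2$-dimensional cone with exactly two rays and the $\tau_i$ are distinct, $\sigma_i$ is generated by $v_i$ and $v_{i+1}$ for $i=1,\dots,l-1$; here $v_0$ (resp.\ $v_{l+1}$) denotes the remaining generator of $\sigma_0$ (resp.\ $\sigma_l$). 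First I would apply the self-intersection formula recalled just above the proposition to $D_i$, whose two neighbouring generators are $v_{i-1}$ and $v_{i+1}$: it gives $v_{i-1}+v_{i+1}=-D_i^2\,v_i=2v_i$, hence $v_{i+1}-v_i=v_i-v_{i-1}$ for every $i=1,\dots,l$.

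Next I would exploit this. The difference vector $e:=v_{i+1}-v_i$ is therefore independent of $i$, and $e\neq 0$ since $v_0\neq v_1$; consequently $v_i=v_0+i\,e$ for $i=0,1,\dots,l+1$, so all the $v_i$ lie (equally spaced, in the direction $e$) on the affine line $\ell:=v_0+\real e$. This line cannot pass through the origin: otherwise $v_0$ would be a scalar multiple of $e$, forcing $v_0$ and $v_1=v_0+e$ to be parallel and contradicting $\det(v_0\mid v_1)=\pm1$ (smoothness of $\sigma_0$). Now each $\sigma_i$ equals $\real_{\ge 0}\cdot[v_i,v_{i+1}]$, the cone over the segment joining $v_i$ and $v_{i+1}$; since $v_0,\dots,v_{l+1}$ occur in this order along $\ell$, these segments concatenate to the single segment $[v_0,v_{l+1}]$, and therefore $\bigcup_{i=0}^{l}\sigma_i=\real_{\ge 0}\cdot[v_0,v_{l+1}]$ is the cone over a compact segment that avoids the origin.

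It remains to check that the cone over a compact segment $S$ with $0\notin S$ is strictly convex. A routine separation argument yields a linear functional $f$ on $\real^2$ with $f|_S>0$; then $f\ge 0$ on $\real_{\ge 0}\cdot S$ with equality only at the origin, so this cone contains no line through the origin and hence is strictly convex (and it is $2$-dimensional, since it contains $\sigma_0$). This is exactly the assertion. I do not expect a serious obstacle; the one place demanding care is the bookkeeping in the self-intersection formula — correctly identifying the neighbouring generators of $D_i$ and the sign of $D_i^2$ so that the recursion really reads $v_{i-1}+v_{i+1}=2v_i$ — after which everything reduces to the convex geometry of a line segment. It is also worth noting that the connectedness hypothesis is precisely what guarantees that the cones $\sigma_0,\dots,\sigma_l$ form the consecutive chain used above.
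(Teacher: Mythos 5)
Your argument is correct and follows essentially the same route as the paper: both proofs reduce to showing that the ray generators satisfy $v_{i-1}+v_{i+1}=2v_i$, so their heads are equally spaced on an affine line, from which strict convexity of $\bigcup_i\sigma_i$ follows. The only differences are cosmetic — the paper derives the collinearity by an area-additivity argument from the determinant form of the self-intersection formula, whereas you invoke the equivalent wall relation directly, and you spell out the final step (the line misses the origin, so the cone over the segment $[v_0,v_{l+1}]$ lies in an open half-plane plus the origin) in more detail than the paper does.
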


\begin{proof}
Suppose $\tau_i$ is generated by $v_i\in\bZ^2$. Without loss of generality, we can assume $v_i$ are labeled in a counterclockwise order as vectors in $\bR^2$. We further let $\sigma_0$ be generated by $v_0,v_1$; and $\sigma_n$ be generated by $v_n,v_{n+1}$.

Let
\[v_i=\binom{a_i}{b_i}.\]
Since $D_i$ is a $(-2)$-curve, we have
\[\left|
\begin{matrix}
a_{i-1} & a_{i+1}\\
b_{i-1} & b_{i+1}
\end{matrix}\right|=2.\]
In other words, the area of the triangle spanned by $v_{i-1}$ and $v_{i+1}$ is $1$.

\begin{figure}[h!]
\begin{center}
\setlength{\unitlength}{3mm}
\begin{picture}(30,7)(-15,0)
\linethickness{0.075mm}
\put(0,0){\vector(-1,3){1.6}}
\put(0,0){\vector(-3,2){4.8}}
\put(0,0){\vector(1,4){1.6}}
\put(-2.5,5.5){$v_{k}$}
\put(-6.5,4){$v_{k+1}$}
\put(2,6){$v_{k-1}$}
\put(-2.8,2.3){$B$}
\put(-0.5,4){$A$}
\put(-8.5,1.5){$L$}
\put(3.8,7.5){\line(-2,-1){11}}
\put(0,-1){$O$}
\end{picture}
\end{center}
\caption{$-2$ toric divisors.}\label{triangle}
\end{figure}
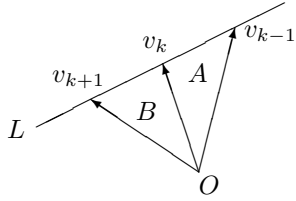

On the other hand, let $A$ be the triangle spanned by vectors $v_{i-1}$ and $v_i$; and let $B$ be the triangle spanned by $v_i$ and $v_{i+1}$. Since $X$ is smooth, the areas of $A$ and $B$ are $\frac{1}{2}$. Now because the sum of areas of $A$ and $B$ is $1$, which is equal to the area of the triangle spanned by $v_{i-1}$ and $v_{i+1}$, we know the heads of the vectors $v_{i-1}$, $v_i$ and $v_{i+1}$ are on the same line $L$. Moreover,
\[v_i=\frac{1}{2}(v_{i-1}+v_{i+1}).\]

Now since the heads of all vectors $v_i$ are on the same line, the cone $\cup_{i=0}^n\sigma_i$ must be strictly convex.
\end{proof}

\subsection{Proof of Theorem \ref{thmGW}}

We are now in a position to give a proof of the main result (Theorem \ref{thmGW}) of this paper.

Let $X$ be a compact semi-Fano toric surface. Let $D_1,\cdots, D_d$ denote the toric prime divisors of $X$. Let $\mathbf{T}$ be a Lagrangian torus fiber and let $\beta_i\in\pi_2(X,\mathbf{T})$ be the basic disk class such that $\beta_i\cdot D_j=\delta_{ij}$. Recall that, given any $b\in\pi_2(X,\mathbf{T})$ of Maslov index two, the moduli space $\bM_{1}(\mathbf{T},b)$ of stable maps from bordered Riemann surfaces of genus zero with one boundary marked point to $X$ in the class $b$ is empty unless $b=\beta_i$, or $b=\beta_i+\alpha$ for some $i\in\{1,\ldots,d\}$ and $\alpha\in H_2(X,\bZ)$ with $c_1(\alpha)=0$. Moreover, such an $\alpha$ must be of the form $\alpha=\sum s_kD_k$ where all $D_k$ have self-intersection $-2$.

Our goal is to compute the open Gromov-Witten invariant $n_b$ for all Maslov index two classes $b\in\pi_2(X,\mathbf{T})$. To state the result, we need the following definitions.
\begin{defn}\label{def_seqad}
Let $m_1, m_2\in\bZ$. We call a sequence $\{s_k \mid m_1\le k\le m_2\}$ admissible with center $l$ if each $s_k$ is a positive integer, and
\begin{enumerate}
\item $s_i\leq s_{i+1}\leq s_i+1$ when $i<l$;
\item $s_i\ge s_{i+1}\ge s_i-1$ when $i\ge l$;
\item $s_{m_1}, s_{m_2}\leq1$.
\end{enumerate}
\end{defn}

For any toric prime divisor $D_i$ with self-intersection $-2$, we have a maximal chain $D_i^{\max}$ of compact toric $(-2)$-divisors containing $D_i$. Given a sequence $\{s_k\}$, we have an induced sequence $\{\tilde s_k\}$ with respect to $D_i$, defined as $\tilde s_j=s_j$ if $D_j\subset D_i^{\max}$ and $s_j=0$ otherwise.

\begin{defn}\label{def_ad}
Let $b=\beta_i+\alpha$ with $\alpha=\sum s_kD_k$. We say $b$ is admissible if $D_i^2=-2$ and the sequence $\{s_k\}$ is identical to its induced sequence with respect to $D_i$, and $\{s_k\}$ is admissible with center $i$.
\end{defn}

To prove Theorem \ref{thmGW}, we recall the computations of local Gromov-Witten invariants for a configuration of $\bP^1$'s in a complex surface which was obtained by Bryan and Leung in \cite{BL} as follows. Let $L(n)$ be a genus $0$ nodal curve consisting of a linear chain of $2n+1$ smooth components $L_{-n},\cdots, L_n$ with an additional smooth component $L_*$ meeting $L_0$. So we have $L_k\cap L_j=\emptyset$ unless $|k-j|=1$ and $L_*\cap L_k=\emptyset$ unless $k=0$. It was shown in \cite{BL} that $L(n)$ can be embedded into a smooth surface $S$ so that all $L_k$ are $(-2)$-curves and $L_*$ is a $(-1)$-curve, where $S$ can be taken as a certain blowup of $\bP^2$ along points.

\begin{figure}[h!]
\begin{center}
\setlength{\unitlength}{3mm}
\begin{picture}(30,5)(-15,-1)
\linethickness{0.075mm}
\put(0,0){\line(0,1){3}}\put(0,3){\circle*{0.2}}
\put(-15,0){\circle*{0.2}}
\multiput(-6,0)(3,0){4}{\line(1,0){3}\circle*{0.2}}
\put(-15,0){\line(1,0){3}\circle*{0.2}}
\put(-6,0){\circle*{0.2}}
\put(-15,0){\circle*{0.2}}\put(-12,0){\line(1,0){0.8}}
\put(-6,0){\line(-1,0){0.8}}\put(-10,-0.3){$\cdots$}
\put(12,0){\circle*{0.2}}\put(12,0){\line(1,0){3}\circle*{0.2}}
\put(12,0){\line(-1,0){0.8}}
\put(6,0){\line(1,0){0.8}}\put(8,-0.3){$\cdots$}
\put(-0.2,3.7){$1$}
\put(-0.3,-1){$s_0$}
\put(2.7,-1){$s_1$}
\put(5.7,-1){$s_2$}
\put(11.7,-1){$s_{n-1}$}
\put(14.7,-1){$s_n$}
\put(-3.3,-1){$s_{-1}$}
\put(-6.3,-1){$s_{-2}$}
\put(-15.3,-1){$s_{-n}$}
\put(-12.3,-1){$s_{-n+1}$}
\end{picture}
\end{center}
\caption{The graph of $L(n)$.}\label{tree}
\end{figure}
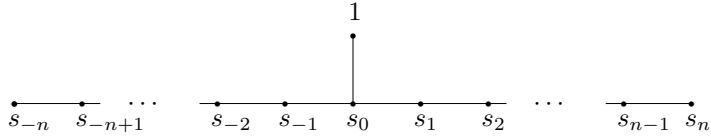

The local Gromov-Witten invariant of $L(n)$ is well-defined\footnote{The local Gromov-Witten invariants of $L(n)$ are the usual genus-zero Gromov-Witten invariants with no marked point of the curve classes $L_*+\sum_{k} s_kL_k$ in $S$.  It turns out that the invariants are independent of the choice of the surface $S$ that $L(n)$ is embedded into, and this is the meaning of well-defindness in this context.} for the curve classes
\[L_*+\sum_{k=-n}^{n} s_kL_k,\quad s_k\ge 0.\]
\begin{thm}\cite{BL}\label{thmBL}
{The genus zero local Gromov-Witten invariants $N(s_k)$ of $L(n)$ for the class $L_*+\sum_{k=-n}^{n} s_kL_k$ is given by
\[ N(s_k)=\left\{ \begin{matrix} 1 & \text{if $\{s_k\}$ is admissible with center $0$.}\\
0& \text{otherwise.}\end{matrix}\right.\]
}\end{thm}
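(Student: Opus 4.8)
The plan is to realize $N(s_k)$ as a genus-zero Gromov--Witten number and compute it by a degeneration together with a deformation-theoretic analysis of multiple covers of $(-2)$-curves. First I would fix a concrete smooth surface $S$ (a suitable iterated blow-up of $\bP^2$, as in \cite{BL}) in which $L(n)$ sits with every $L_k$ a $(-2)$-curve and $L_*$ a $(-1)$-curve. One checks that for a class $\beta = L_* + \sum_k s_k L_k$ every stable genus-zero map to $S$ representing $\beta$ has image inside $L(n)$, so that $\bM_{0,0}(S,\beta)$ is compact; the local invariant $N(s_k)$ is then $\int_{\bM_{0,0}(S,\beta)} e(\mathrm{Ob})$ with obstruction bundle $\mathrm{Ob}=R^1\pi_* f^* K_S$ (equivalently $\deg[\bM_{0,0}(K_S,\beta)]\virt$ for the local Calabi--Yau threefold $K_S$), and this number depends only on a formal neighbourhood of $L(n)$, not on the chosen $S$.

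Next I would analyze the structure of a stable map $f\colon C\to S$ in class $\beta$. Because the coefficient of $L_*$ is $1$ and $L_*\cong\bP^1$, exactly one component $C_*\subset C$ maps isomorphically onto $L_*$; all other positive-degree components map into the chain $\bigcup_k L_k$, and connectedness of $C$ forces the subcurve lying over $\bigcup_k L_k$ to meet $C_*$ only over the node $L_*\cap L_0$. The heart of the matter is the deformation theory of the resulting genus-zero stable map into the chain of $(-2)$-curves with prescribed multiplicities $s_k$ and one distinguished point over $L_0$: stratum by stratum I would compute $R^1\pi_* f^* K_S$ and show, by an inductive ``peeling'' argument, that its Euler class integrates to $1$ precisely when $\{s_k\}$ is non-increasing as one moves outward from $L_0$ with steps $0$ or $1$ and with the two outermost multiplicities at most $1$ --- that is, precisely when $\{s_k\}$ is admissible with center $0$ --- and to $0$ otherwise, the class being then either unrepresented or represented only by a positive-dimensional family carrying a fibrewise-positive-rank obstruction bundle. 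A convenient way to run the induction is to blow down $L_*$: the image of $L_0$ becomes a $(-1)$-curve meeting the $(-2)$-curves $L_{\pm1}$, and the same invariant for the shorter chain reappears with $s_0$ shifted, giving a recursion whose ``stable'' solutions are exactly the admissible sequences; alternatively one may run torus localization on $K_S$ over the toric neighbourhood of $L(n)$.

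\textbf{The main obstacle} is the combinatorial--deformation-theoretic core of the second paragraph: one must \emph{derive} the precise shape of Definition \ref{def_seqad} --- the existence of a ``center'', the monotonicity with unit steps, and the $\le 1$ condition at the two ends --- from the vanishing or non-vanishing of the integral of $e(R^1\pi_* f^* K_S)$ over the relevant strata of multiple covers of a $(-2)$-chain constrained by a single incidence with $L_*$. Once this dichotomy is established, assembling the contributions is formal: outside the admissible locus excess dimension together with the obstruction Euler class forces $0$, while on the admissible locus one is left with a single reduced point contributing $+1$, so $N(s_k)\in\{0,1\}$ exactly as stated.
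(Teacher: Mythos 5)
First, a point of comparison: the paper does not prove this statement at all. Theorem \ref{thmBL} is imported from Bryan--Leung \cite{BL} (the paper only remarks that ``admissible with center $0$'' is their ``$1$-admissible''), so there is no internal proof to measure your attempt against; what you have written is an attempted reconstruction of the argument of \cite{BL}.

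As such a reconstruction, your setup and the structural analysis at the start of your second paragraph are sound and do match the strategy of \cite{BL}: realize $N(s_k)$ as $\int_{\bM_{0,0}(S,\beta)} e(R^1\pi_* f^* K_S)$ for a surface $S$ in which the configuration is rigid, note that multiplicity $1$ on $L_*$ forces a unique component mapping isomorphically onto $L_*$ and attached to the rest of the domain only over $L_*\cap L_0$, and then stratify the covers of the $(-2)$-chain. But there is a genuine gap exactly where you flag it, and that gap is the entire content of the theorem. You give no mechanism for the dichotomy. Concretely, one needs (i) an explicit description of each stratum of the moduli space as a product of spaces $\bM_{0,m}$ recording how the components covering $L_k$ attach to those covering $L_{k\pm 1}$; (ii) a computation, via the normalization sequence of the nodal domain, of the restriction of $R^1\pi_* f^* K_S$ to each stratum; and (iii) the verification that a failure of admissibility (a jump of size $\geq 2$, non-monotonicity away from the center, or an end multiplicity $\geq 2$) produces either a trivial summand in the obstruction bundle or an excess dimension unmatched by the obstruction rank, killing the contribution, while an admissible sequence forces the stratum to be a single reduced unobstructed point. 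None of this is derived, and the condition $s_{m_1}, s_{m_2}\leq 1$ in particular cannot be guessed without step (iii). The two shortcuts you offer in its place are also not viable as stated: blowing down $L_*$ sends the class $L_*+\sum_k s_k L_k$ to $\sum_k s_k \bar{L}_k$ rather than to a strict transform, so the Hu/Gathmann-type comparison used elsewhere in this paper does not apply and no recursion is actually established; and torus localization is not available, since for $n\geq 1$ the configuration $L(n)$ cannot be made torus-invariant in a smooth toric surface (the component $L_0$ would need three torus-invariant neighbours), while the fibrewise $\bC^*$-action on $K_S$ fixes the zero section pointwise and returns the same integral one started with. So the proposal is an accurate table of contents for the Bryan--Leung proof, but the theorem itself remains unproved.
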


We remark that admissible with center $0$ here is an equivalent term for \textit{$1$-admissible} used in \cite{BL}, and the invariant $N(s_k)$ is independent of the surface $S$ into which we embed $L(n)$.

We will need to apply Theorem \ref{thmBL} to situations which are (apparently) more general than those we described above. Let us explain why this can be done as follows. Let $\mathbf{s}:=\{s_k \mid m_1\le k\le m_2\}$ (with $m_1\leq0$ and $m_2\geq0$) be a sequence which is admissible with center $0$. Let $L(\mathbf{s})$ be a genus $0$ nodal curve consisting of a linear chain of $m_2-m_1+1$ smooth components $L_{m_1},\ldots,L_{m_2}$ with an additional smooth component $L_*$ meeting $L_0$, so that we have $L_k\cap L_j=\emptyset$ unless $|j-k|=1$ and $L_*\cap L_k=\emptyset$ unless $k=0$. We shall regard $L(\mathbf{s})$ as part of the genus 0 nodal curve $L(n)$ where $n=\mathrm{max}\{-m_1,m_2\}$. Now let $\overline M(\mathbf{s})$ be the moduli space of genus 0 stable maps to $L(\mathbf{s})$ in the class
$$L_*+\sum_{k=m_1}^{m_2} s_kL_k.$$
Also let $\overline M(n)$ be the moduli space of genus 0 stable maps to $L(n)$ in the class
$$L_*+\sum_{k=-n}^{n} s_kL_k,$$
where we set $s_k=0$ for $k<m_1$ and for $k>m_2$. Then the moduli spaces $\overline M(\mathbf{s})$ and $\overline M(n)$ can naturally be identified. More importantly, the proof of Lemma 5.3 on p. 385 of \cite{BL} shows that they have the same deformation and obstruction theories, and hence define the same invariants. Therefore, Theorem \ref{thmBL} still works in situations where the configuration of curves $L(\mathbf{s})$ is not symmetric with respect to the center $L_0$.

\begin{proof}[Proof of Theorem \ref{thmGW}]
Given a semi-Fano toric surface $X$ defined by a fan $\Sigma$, we would like to compute the open Gromov-Witten invariant $n_b$ for $b\in\pi_2(X,\mathbf{T})$. First of all, by \cite{CO, FOOO}, $n_b$ is non-zero only when $b=\beta_i+\alpha$ for some $i$ and $\alpha\in H_2(X,\bZ)$ represented by rational curves with $c_1(\alpha)=0$. It is already known that $n_b = 1$ when $\alpha = 0$, so it suffices to consider $\alpha \not= 0$.

Suppose $n_{\beta_i+\alpha} \neq 0$ and $\alpha \neq 0$. Then $D_i$ must have self-intersection $-2$, and $\alpha$ must be of the form $\alpha = \sum_{k \in I} s_k D_k$, where $I$ is the index set containing all the natural numbers $k$ such that $D_k \subset D_i^{\mathrm{max}}$, and $s_i \not= 0$.  We want to show that the sequence $\{s_k\}$ is admissible, and in such cases $n_b = 1$.

This is done by equating the open Gromov-Witten invariant $n_b$ to a closed Gromov-Witten invariant of yet another toric manifold $Y$, which is a toric modification of $X$.  The modification is constructed as follows. Let $v_i$ be the primitive generator of the ray of $\Sigma$ corresponding to $D_i$, and let $\Sigma_1$ be the refinement of $\Sigma$ by adding the ray generated by $v_\infty := -v_i$ (and then completing it to a complete fan)\footnote{Note that $\Sigma_1$ may no longer be convex, which means that $X_{\Sigma_1}$ may no longer be semi-Fano.  Nevertheless, as we shall see in the next paragraph, that the curve classes under our consideration never intersect the new toric divisor and they are rigid (because they are $(-2)$ curves).  Thus their invariants can be computed by Theorem \ref{thmBL}.}.  In general the corresponding toric variety $X_{\Sigma_1}$ may not be smooth. If this is the case, then we take a toric desingularization $Y$ of $X_{\Sigma_1}$ by adding rays which are adjacent to $v_\infty$.  By abuse of notations we still denote the divisors in $Y$ corresponding to $v_l$'s by $D_l$, and $\alpha = \sum_{k \in I} s_k D_k$ is regarded as a homology class in $Y$. We remark that the above procedure does nothing if the ray generated by $v_\infty$ is already in $\Sigma$.

By Proposition \ref{minus}, since $D_k$'s have self-intersection $(-2)$ for $k \in I$ ($I$ is the index set introduced above) and $\cup_{k\in I} D_k$ is connected, the union of the top-dimensional cones adjacent to the rays generated by $v_k$ for $k \in I$ is strictly convex.  Thus the ray generated by $v_\infty$ cannot be adjacent to those generated by $v_k$ for $k \in I$.  Then the newly added rays are not adjacent to any $v_k$ for $k \in I$, and thus each $D_k \subset Y$ for $k \in I$ still has self-intersection number $(-2)$.  Let $f \in H_2 (Y)$ be the fiber class, that is, $f = \beta_i + \beta_\infty$, where $\beta_\infty$ is the disk class corresponding to $v_\infty$.

By Theorem 1.1 in \cite{C} and its generalization in \cite{LLW}, we have the following equality between open and closed Gromov-Witten invariants:
$$n_b=GW^{Y,\alpha+f}_{0,1}([\pt]).$$
The proof was by showing that the open moduli space $\overline M_1^{\mathrm{ev}=p}(X,b)$ and the closed moduli space $\overline M_1^{\mathrm{ev}=p}(Y,f + \alpha)$ are isomorphic as Kuranishi spaces.  We refer the reader to \cite{C, LLW} for details.

Next we identify $GW^{Y,\alpha+f}_{0,1}([\pt])$ with the local Gromov-Witten invariant of a configuration of $\bP^1$'s. Let $\tilde Y$ be the blowup of $Y$ at a generic point $p$. Then, by the result of Hu \cite{H} and Gathmann \cite{G}, which relates Gromov-Witten invariants of blowups along points, we know that the Gromov-Witten invariant of $Y$ for a class $\gamma$ with one point constraint is equal to that of $\tilde Y$ for the strict transform of $\gamma$ without this point constraint. More precisely, we have
$$GW^{Y,\alpha+f}_{0,1}([\pt])=GW^{\tilde Y,\alpha+f'}_{0,0},$$
where $f'$ is the strict transform $f$, which is the class of a $(-1)$-curve.

Because $\alpha=\sum s_kD_k$, with all $D_k$ have self-intersection $-2$, it is easy to see that every curve in $\alpha+f'$ is a tree of $\bP^1$'s, with the same configuration as $L(\mathbf{s})$, up to a relabeling of its indices and shifting of the center. Therefore, $GW^{\tilde Y,\alpha+f'}_{0,0}$ is precisely the local Gromov-Witten invariant of $L(\mathbf{s})$ for the class $f'+\alpha$. Theorem \ref{thmGW} now follows from Theorem \ref{thmBL} and the discussion that follows.
\end{proof}

Theorem \ref{thmGW} allows us to explicitly compute the superpotential for any compact semi-Fano toric surface. Since these surfaces can be completely classified (there are totally 16 such surfaces, 5 of which are Fano), we can give explicit formulas for all their superpotentials; a list of which is given in the appendix. In a very recent work Fukaya-Oh-Ohta-Ono \cite{FOOO5}, our explicit formula for the superpotential $W$ of the semi-Fano toric surface $X_{11}$ in the table was used in their proof of the existence of a continuum of mutually disjoint non-displaceable Lagrangian tori in a cubic surface.

\section{Small quantum cohomology and Jacobian ring}\label{qc=Jac}

For a toric Fano manifold $X$, the map
$$\psi:QH^*(X)\rightarrow Jac(W),\ D_i\mapsto Z_{\beta_i},$$
gives a canonical ring isomorphism between the small quantum cohomology $QH^*(X)$ of $X$ and the Jacobian ring $Jac(W)$ of the superpotential $W$ \cite{CL,FOOO}. Recall that the Jacobian ring of $W$ is defined as
$$Jac(W)=\bC[z_1^{\pm1},\ldots,z_n^{\pm1}]/\langle\partial_1W,\ldots,\partial_nW\rangle,$$
where $\partial_j$ denotes $z_j\frac{\partial}{\partial z_j}$ and $n=\dim X$. In the non-Fano case, it is expected that we still have an isomorphism $QH^*(X)\cong Jac(W)$,\footnote{This is now proved in the recent work \cite{FOOO4} of Fukaya, Oh, Ohta and Ono (as a special case of their main result).} but the map $\psi:QH^*(X)\rightarrow Jac(W)$ needs to be modified by quantum corrections.

In the following, we briefly recall the definition of the corrected map following Fukaya, Oh, Ohta and Ono \cite{FOOO,FOOO2}. As before, $X$ is a compact toric manifold and $\mathbf{T}$ is a Lagrangian torus fiber. Consider the moduli space $\bM_{k,l}(\mathbf{T},\beta)$ of stable maps from genus 0 bordered Riemann surfaces to $(X,L)$ with $k$ boundary marked points and $l$ interior marked point in the class $\beta$. We have
evaluation maps
$$\mathrm{ev}^\mathrm{int}:\bM_{k,l}(\mathbf{T},\beta)\rightarrow X^l,\ [u;p_0,p_1,\ldots,p_{k-1};z_1, \ldots, z_l]\mapsto (u(z_1), \ldots, u(z_l)),$$
and
$$\mathrm{ev}_i:\bM_{k,1}(\mathbf{T},\beta)\rightarrow\mathbf{T},\ [u;p_0,p_1,\ldots,p_{k-1};z]\mapsto u(p_i),$$
$i=0,1,\ldots,k-1$, at the interior and boundary marked points respectively.

Let $V_1, \ldots, V_l \subset X$ be toric subvarieties. Consider the fiber product
$$\bM_{1,l}(\mathbf{T},\beta;V_1, \ldots, V_l):=\bM_{1,l}(\mathbf{T},\beta)_{\mathrm{ev}^\mathrm{int}}\times_{X^l} \left(\prod_{j=1}^l V_j\right).$$
More precisely, $\bM_{1,l}(\mathbf{T},\beta;V_1, \ldots, V_l)$ is the set of all elements
$$([u;p_0;z_1, \ldots, z_l],x_1, \ldots, x_l)\in\bM_{1,l}(\mathbf{T},\beta)\times \prod_{j=1}^l V_j$$
such that $u(z_1, \ldots, z_l)=(x_1,\ldots,x_l)$. The expected dimension of
$\bM_{1,l}(\mathbf{T},\beta;V_1, \ldots, V_l)$ is given by $n+\mu(\beta) + 2l - 2 - \sum_{j=1}^l \textrm{codim}_\bR(V_j)$.

\begin{defn}[\cite{FOOO2,FOOO3}] \label{def_openGW}
The genus zero open Gromov-Witten invariant $n(\beta;V_1, \ldots, V_l)$ is defined as
$$n(\beta;V_1, \ldots, V_l)=\mathrm{ev}_{0*}([\bM_{1,l}(\mathbf{T},\beta;V_1, \ldots, V_l)]^{\virt})\in\rat.$$
It is non-zero only when
$$\mu(\beta) = 2 - 2l + \sum_{j=1}^l \textrm{codim}_\bR(V_j).$$
\end{defn}
By Lemma 6.8 in \cite{FOOO2}, the number $n(\beta;V_1, \ldots, V_l)\in\rat$ is independent of the auxiliary $T^n$-equivariant perturbation data used to define $[\bM_{1,1}(\mathbf{T},\beta;V)]^{\virt}$ and hence gives an invariant.  Definition \ref{one-pt_openGW} is the special case when $l = 0$.

Choose an additive basis $\{T_i=\textrm{PD}[V_i]\}$ of $H^*(X,\bC)$ represented by the Poincar\'{e} duals of fundamental classes of toric subvarieties $V_i\subset X$.
\begin{defn}[\cite{FOOO2,FOOO3}]\label{def5.1}
Define an additive map $\psi:QH^*(X)\rightarrow Jac(W)$ by setting
$$\psi(T_i)=\sum_{\beta:\mu(\beta)=\textrm{codim}_\bR(V_i)}n(\beta;V_i)Z_\beta,$$
and extending linearly.
\end{defn}

\begin{remark}
Fukaya, Oh, Ohta and Ono \cite{FOOO2} also study the so-called potential function with bulk of a toric manifold $X$, by incorporating deformations of Floer cohomology by cycles on the ambient space $X$. (In contrast, the superpotential, or what Fukaya, Oh, Ohta and Ono called the potential function, $W$ just encodes deformations of Floer cohomology by the cycles on $L$.) In the recent work \cite{FOOO4}, they proved that the Jacobian ring of the potential function with bulk is canonically isomorphic to the \textit{big} quantum cohomology ring of $X$. The map $\psi:QH^*(X)\rightarrow Jac(W)$ we discuss here is a special case of this isomorphism, when the bulk deformation is set to zero.  We will also discuss the potential function with bulk in Section \ref{comments}.
\end{remark}

Now, for the toric prime divisors $D_1,\ldots,D_d$, the map $\psi$ is given by
$$D_i\mapsto\sum_{\beta:\mu(\beta)=2}n(\beta;D_i)Z_\beta.$$
A special case of Lemma 9.2 in \cite{FOOO2} gives the following analogue of the divisor equation for open Gromov-Witten invariants.

\begin{prop}[\cite{FOOO2}]\label{divisor}
If $D$ is a toric divisor, then we have the following equality
$$n(\beta;D)=(D\cdot\beta)n_\beta.$$
\end{prop}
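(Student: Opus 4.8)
The final statement to prove is Proposition \ref{divisor}: for a toric divisor $D$, one has $n(\beta;D)=(D\cdot\beta)n_\beta$. Since this is attributed to \cite{FOOO2} (a special case of their Lemma 9.2), the job is really to explain how the divisor equation for open Gromov-Witten invariants follows by adapting the classical argument for closed invariants, taking care of the subtleties coming from the Kuranishi structure and the boundary marked point.

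\textbf{Plan of proof.} The strategy is to compare the moduli space $\bM_{1,1}(\mathbf{T},\beta;D)$ (one boundary marked point, one interior marked point forced onto $D$) with the moduli space $\bM_{1,0}(\mathbf{T},\beta)=\bM_1(\mathbf{T},\beta)$ via the forgetful map $\mathfrak{forget}$ that drops the interior marked point and stabilizes. First I would note that by Definition \ref{def_openGW} the invariant $n(\beta;D)$ is nonzero only when the dimension count forces $\mu(\beta)=2$, so we are exactly in the situation where $n_\beta$ is defined as a number in $\rat$. The fiber of $\mathfrak{forget}$ over a generic stable disk $[u;p_0]$ is the domain disk minus its boundary, i.e. an open disk; forcing the interior marked point to map into $D$ cuts this down to the finite set $u^{-1}(D)$, which generically consists of $D\cdot\beta$ points (this uses that $D$ is a toric divisor, that $\partial u\subset\mathbf{T}$ avoids $D$, and positivity of intersection for the holomorphic disk). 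Thus set-theoretically $\bM_{1,1}(\mathbf{T},\beta;D)$ is a $(D\cdot\beta)$-fold ``cover'' of $\bM_1(\mathbf{T},\beta)$ in the appropriate virtual sense.

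\textbf{Key steps, in order.} (1) Identify, at the level of Kuranishi structures, that $\mathfrak{forget}: \bM_{1,1}(\mathbf{T},\beta;D)\to\bM_1(\mathbf{T},\beta)$ is compatible with the obstruction bundles: the interior marked point and the incidence condition at $D$ add no new obstruction because $D$ is a smooth toric divisor meeting the disk transversally at interior points, so the Kuranishi structure on the source is pulled back from the target together with the extra evaluation constraint. (2) Check that $\ev_0$ on $\bM_{1,1}(\mathbf{T},\beta;D)$ factors through $\mathfrak{forget}$ followed by $\ev_0$ on $\bM_1(\mathbf{T},\beta)$, so that $\ev_{0*}[\bM_{1,1}(\mathbf{T},\beta;D)]\virt = \deg(\ev^{\mathrm{int}}\text{-constraint along }\mathfrak{forget})\cdot \ev_{0*}[\bM_1(\mathbf{T},\beta)]\virt$. (3) Compute that degree: the virtual count of interior preimages in $D$ of a disk in class $\beta$ equals the homological intersection number $D\cdot\beta$, using that the contribution of boundary bubbles and sphere bubbles is accounted for correctly in the virtual class (here one invokes the compatibility of virtual fundamental chains under $\mathfrak{forget}$ established in \cite{FOOO2}, together with the fact that $\mathbf{T}$ is disjoint from $D$ so no intersection occurs on the Lagrangian boundary). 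Assembling (1)--(3) gives $n(\beta;D) = (D\cdot\beta)\,n_\beta$.

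\textbf{Main obstacle.} The genuinely delicate point is step (1)--(3) carried out at the level of virtual fundamental chains rather than honest manifolds: one must know that the forgetful map interacts well with the multisections/perturbations used to define $[\bM_{1,1}(\mathbf{T},\beta;D)]\virt$ and $[\bM_1(\mathbf{T},\beta)]\virt$, so that the virtual degree of the fiberwise incidence condition is literally $D\cdot\beta$ and not merely that up to a correction from bubbling strata. This is precisely the content of Lemma 9.2 in \cite{FOOO2}, whose proof rests on their careful construction of compatible Kuranishi structures for the tower of moduli spaces obtained by adding interior marked points and on the divisor axiom in that framework. Since we are in the semi-Fano surface case, $\mu(\beta)=2$ forces $\beta=\beta_i+\alpha$ with $c_1(\alpha)=0$ and all bubble components lying over $(-2)$-curves disjoint from $\mathbf{T}$, which makes the geometric picture transparent; nonetheless the rigorous statement we use is exactly the specialization of their general lemma, and we simply quote it.
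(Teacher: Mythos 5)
Your proposal is correct and ultimately takes the same route as the paper: Proposition \ref{divisor} is stated there as a direct citation of Lemma 9.2 in \cite{FOOO2} with no independent proof given, and you likewise defer the rigorous content (compatibility of virtual fundamental chains with the forgetful map) to that lemma. The forgetful-map/fiberwise-intersection sketch you supply is the standard heuristic behind the divisor equation and is consistent with how the cited result is established, so nothing further is needed.
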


Combining with our Theorem \ref{thmGW}, we can compute the map $\psi:QH^*(X)\rightarrow Jac(W)$ on toric divisors for any compact semi-Fano toric surface. As an application, we outline a proof of Corollary \ref{thmQC=JAC} in the following.

To begin with, recall that the cohomology ring $H^*(X,\bC)$ of a compact toric manifold $X$ is generated by the divisor classes $D_1,\ldots,D_d\in H^2(X,\bC)$. Moreover, a presentation of $H^*(X,\bC)$ is given by
$$H^*(X,\bC)=\bC[D_1,\ldots,D_d]/(\mathcal{L}+\mathcal{SR}),$$
where $\mathcal{L}$ is the ideal generated by linear equivalences among divisors and $\mathcal{SR}$ is the Stanley-Reisner ideal generated by primitive relations.

By a result of Siebert and Tian \cite{ST}, when $X$ is semi-Fano, the small quantum cohomology $QH^*(X)$ is also generated by the divisor classes $D_1,\ldots,D_d$ and a presentation of $QH^*(X)$ is given by replacing each relation in $\mathcal{SR}$ by its quantum counterpart, i.e. denoting the quantum Stanley-Reisner ideal by $\mathcal{SR}_Q$, then we have
$$QH^*(X)=\bC[D_1,\ldots,D_d]/(\mathcal{L}+\mathcal{SR}_Q).$$

Consider the case when $X=X_\Sigma$ is a semi-Fani toric surface. We also assume that $X$ is not $\proj^2$. Then any primitive collection is of the form $\mathfrak{P}=\{v_i,v_j\}$ so that $v_i,v_j$ do not generate a cone in $\Sigma$. To compute $\mathcal{SR}_Q$, we need to calculate $D_i\ast D_j$, where $\ast$ denotes the small quantum product. Choose dual bases $\{D_m\}$, $\{D^m\}$ of $H^2(X)$, both represented by toric divisors. Then, by the divisor equation and a straightforward manipulation, we have
\begin{eqnarray*}
D_i\ast D_j & = & \sum_{\alpha:c_1(\alpha)=2}(D_i\cdot\alpha)(D_j\cdot\alpha)GW^{X,\alpha}_{0,1}([\pt])q^\alpha\\
            &   & +\sum_m\left(\sum_{\alpha:c_1(\alpha)=1}(D_i\cdot\alpha)(D_j\cdot\alpha)(D^m\cdot\alpha)
                   GW^{X,\alpha}_{0,0}q^\alpha\right)D_m.
\end{eqnarray*}

The Gromov-Witten invariants $GW^{X,\alpha}_{0,1}([\pt]),GW^{X,\alpha}_{0,0}$ can be computed using the results of Bryan-Leung \cite{BL} as follows. To compute $GW^{X,\alpha}_{0,1}([\pt])$, note that we have $c_1(\alpha)=2$ so that $\alpha^2=0$. Such an $\alpha$ must be of the form $\alpha'+f$ where $\alpha'$ is represented by a chain of $(-2)$-toric prime divisors and $f$ is a fiber class. We are therefore in exactly the same situation as in the proof of Theorem \ref{thmGW}. Hence, $GW^{X,\alpha}_{0,1}([\pt])$ can be computed as before.

As for $GW^{X,\alpha}_{0,0}$, we have $c_1(\alpha)=1$, so that $\alpha$ is represented by a chain $\sum_{k=-p}^qs_kD_{i_k}$ of toric prime divisors such that $D_{i_k}^2=-2$ for all $k\neq0$, $D_{i_0}^2=-1$ and $s_0=1$. The results of Bryan and Leung also apply in this situation: namely, the Gromov-Witten invariant $GW^{X,\alpha}_{0,0}=1$ if both the chains $\sum_{k=-p}^0 s_kD_{i_k}$ and $\sum_{k=0}^q s_kD_{i_k}$ are admissible with center 0 and $GW^{X,\alpha}_{0,0}=0$ otherwise.

Let us give an example to illustrate the explicit computations.\\

\noindent\textbf{Example.} Let $\Sigma$ be the fan whose rays are generated by
$$v_1=(1,0),v_2=(0,1),v_3=(-1,-1),v_4=(0,-1),v_5=(1,-1),v_6=(2,-1).$$
This determines a toric surface $X$. We equip $X$ with a toric K\"ahler form such that the polytope $P$ is given by
\begin{eqnarray*}
P & = & \{(x_1,x_2)\in\bR^2:x_1\geq0,0\leq x_2\leq t_1+t_3+2t_4,x_1+x_2\leq t_1+t_2+2t_3+3t_4,\\
  &   & \qquad\qquad t_1+t_4+x_1-x_2\geq0,t_1+2x_1-x_2\geq0\},
\end{eqnarray*}
where $t_i>0$ are the K\"ahler parameters.

\begin{figure}[ht]
\begin{center}
\includegraphics{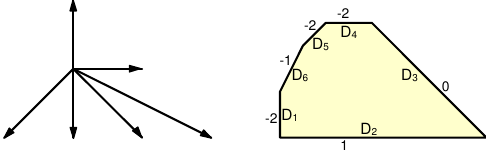}
\end{center}
\caption{The fan $\Sigma$ and the polytope $P$ defining $X$. The numbers beside the divisors indicate their self-intersection numbers.}\label{fig1}
\end{figure}

The linear equivalences among divisors are generated by the following two relations
\begin{eqnarray*}
D_1-D_3+D_5+2D_6=0,\\
D_2-D_3-D_4-D_5-D_6=0.
\end{eqnarray*}
Hence, $H^2(X)$ is of rank 4. We choose the dual bases $\{D^m\}$ and $\{D_m\}$ to be $\{D_1,D_4,D_5,D_6\}$ and $\{D_2,D_3,D_4+2D_3,D_1+2D_2\}$ respectively.

We can now start to compute the primitive relations. For example, we want to compute $D_2\ast D_4$. We need to look for all curve classes with $c_1=1,2$ which intersect both $D_2$ and $D_4$ non-trivially. There are two such classes with $c_1=2$: the classes represented by $D_3$ and $D_3+D_4$, and also two with $c_1=1$: the classes represented by $D_1+D_5+D_6$ and $D_1+D_4+D_5+D_6$. Since all these configurations are admissible, the corresponding Gromov-Witten invariants are all equal to one, by the above discussion. Hence, we get
\begin{eqnarray*}
D_2\ast D_4 & = & q_1q_3q_4^2-q_1q_2q_3q_4^2+q_1q_3q_4(-D_2+D_3-(D_4+2D_3)+(D_1+2D_2))\\
&   & \qquad -q_1q_2q_3q_4(-D_2-D_3+(D_1+2D_2))\\
& = & q_1q_3q_4^2-q_1q_2q_3q_4^2+q_1q_3q_4(D_1+D_5+D_6)\\
&   & \qquad -q_1q_2q_3q_4(D_1+D_4+D_5+D_6),
\end{eqnarray*}
where we have used linear equivalences to get the second equality. Similarly, we can compute all other primitive relations.\\

Having computed all the primitive relations, we can go on to show the following
\begin{lem}
The map
$$\psi:\bC[D_1,\ldots,D_d]\to\bC[z_1^{\pm1},z_2^{\pm1}],\ D_i\mapsto\sum_{\beta:\mu(\beta)=2}n(\beta;D_i)Z_\beta$$
defines a ring homomorphism $\psi:QH^*(X)\to Jac(W)$.
\end{lem}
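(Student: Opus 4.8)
The plan is to show that $\psi$ descends to a well-defined map on the quotient ring $QH^*(X) = \bC[D_1,\ldots,D_d]/(\mathcal{L}+\mathcal{SR}_Q)$ and that this descended map respects products. Concretely, since $\psi$ is already defined as an additive (in fact $\bC$-algebra) map out of the free polynomial ring $\bC[D_1,\ldots,D_d]$ by sending each generator $D_i$ to the element $\sum_{\beta:\mu(\beta)=2}n(\beta;D_i)Z_\beta \in \bC[z_1^{\pm1},z_2^{\pm1}]$ and extending multiplicatively, the only thing to verify is that the ideal $\mathcal{L}+\mathcal{SR}_Q$ lies in the kernel of $\psi$. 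Equivalently, $\psi$ must kill each linear-equivalence relation in $\mathcal{L}$ and each quantum Stanley--Reisner relation in $\mathcal{SR}_Q$.

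First I would handle the linear relations $\mathcal{L}$. By Proposition \ref{divisor}, $\psi(D_i) = \sum_{\beta:\mu(\beta)=2}(D_i\cdot\beta)\,n_\beta Z_\beta = \partial W / \text{(the appropriate derivation)}$; more precisely, writing $W=\sum_{\beta:\mu(\beta)=2}n_\beta Z_\beta$ and noting $Z_\beta$ is the monomial $\exp(-\int_\beta\omega)\mathrm{Hol}_\conn(\partial\beta)$, one checks that for a toric divisor $D$ the quantity $\sum_\beta (D\cdot\beta)n_\beta Z_\beta$ equals the image of $D$ under the natural pairing between $H^2(X)$ and the lattice of monomials, so that a linear equivalence $\sum a_i D_i = 0$ in $H^2(X)$ forces $\sum a_i \psi(D_i)=0$ because $\partial\beta$ depends on $\beta$ only through its boundary class and the relation holds at the level of $\mathrm{Hol}_\conn(\partial\beta)$. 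This is essentially the statement that $\psi$ restricted to $H^2$ factors through $H^2(X)$ itself; it is a formal consequence of Proposition \ref{divisor} together with the identification of $Z_\beta$ as a monomial.

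Next, and this is the heart of the matter, I would verify that $\psi$ kills $\mathcal{SR}_Q$. For each primitive collection $\mathfrak{P}=\{v_i,v_j\}$ we have the quantum Stanley--Reisner relation obtained from $D_i * D_j$ via the Siebert--Tian presentation, and on the other side $Jac(W) = \bC[z_1^{\pm1},\ldots,z_n^{\pm1}]/\langle\partial_1 W,\ldots,\partial_n W\rangle$. The claim is that $\psi(D_i)\cdot\psi(D_j) - \psi(\text{RHS of the quantum SR relation})$ lies in the Jacobian ideal $\langle\partial_1 W,\ldots,\partial_n W\rangle$. The strategy here is to compute $\psi(D_i)\psi(D_j)$ as a product of the two monomial-sums $\big(\sum (D_i\cdot\beta)n_\beta Z_\beta\big)\big(\sum (D_j\cdot\beta')n_{\beta'}Z_{\beta'}\big)$, expand, and compare term by term with the explicit formula for $D_i * D_j$ recorded earlier in the section (the sum over $\alpha$ with $c_1(\alpha)=2$ contributing the point invariants $GW^{X,\alpha}_{0,1}([\pt])$ and the sum over $\alpha$ with $c_1(\alpha)=1$ contributing the $GW^{X,\alpha}_{0,0}$ terms times a divisor), using Theorem \ref{thmGW} and Theorem \ref{thmBL} to evaluate all the invariants. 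The difference should collapse, modulo the relations $\partial_k W = 0$, to zero; in practice one uses $\partial_k W = \sum_\beta \langle\partial_k, \partial\beta\rangle n_\beta Z_\beta = 0$ to rewrite the ``extra'' monomials coming from the product against those coming from the quantum correction. Since there are only $16$ semi-Fano toric surfaces and the primitive collections for each are few, this is a finite check, and for $\p2$ the relation $D_1 * D_2 * D_3 = q$ is checked directly against the known superpotential.

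The main obstacle I expect is precisely this last comparison: showing that the monomials produced by multiplying $\psi(D_i)\psi(D_j)$ out — which a priori include contributions from pairs $(\beta,\beta')$ whose sum is \emph{not} a Maslov-index-two class and hence has no direct Gromov--Witten interpretation — reorganize, after reduction modulo the Jacobian ideal, into exactly the quantum Stanley--Reisner combination. This requires knowing that the ``excess'' terms are precisely of the form $z_k\,\partial_k W$ times a monomial, which in turn relies on the combinatorics of admissible classes from Definition \ref{def_ad} and on the fact (from the structure of semi-Fano toric surfaces) that the chains of $(-2)$-curves contributing to $W$ are sufficiently simple. I would organize this by first treating the Fano case (where $W=W_0$ and the check reduces to the classical computation of Cho--Oh and Fukaya--Oh--Ohta--Ono), then adding the quantum-correction monomials one chain of $(-2)$-curves at a time, verifying at each stage that the new terms in $\psi(D_i)\psi(D_j)$ match the new terms in the Siebert--Tian formula modulo the (correspondingly corrected) Jacobian ideal.
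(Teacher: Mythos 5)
Your overall strategy --- reduce to showing that the ideal $\mathcal{L}+\mathcal{SR}_Q$ maps into the Jacobian ideal, and verify the quantum Stanley--Reisner relations by an explicit, surface-by-surface computation comparing $\psi(D_i)\psi(D_j)$ with $\psi$ of the Siebert--Tian expression for $D_i\ast D_j$ (with the invariants evaluated via Theorems \ref{thmGW} and \ref{thmBL}) --- coincides with what the paper does for the second half of the argument. The gap is in your treatment of $\mathcal{L}$. You assert that a linear equivalence $\sum_i a_iD_i=0$ forces $\sum_i a_i\psi(D_i)=0$ already in $\bC[z_1^{\pm1},z_2^{\pm1}]$, ``because $\partial\beta$ depends on $\beta$ only through its boundary class.'' This is false, and no such cancellation mechanism exists: the $Z_\beta$ for distinct $\beta$ are distinct monomials, so nothing cancels. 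The point is that the intersection pairing of a disk class with a divisor linearly equivalent to zero is \emph{not} zero. For $\beta=\beta_k+\alpha$ one has $\sum_i v_i^j(D_i\cdot\beta)=\sum_i v_i^j(\delta_{ik}+D_i\cdot\alpha)=v_k^j$, since $\alpha$ is a curve class (so its pairing with a principal toric divisor vanishes) while the basic disk class contributes $\delta_{ik}$. Hence
$$\psi\Bigl(\sum_{i=1}^d v_i^jD_i\Bigr)=\sum_{k,\alpha}v_k^j\,n_{\beta_k+\alpha}Z_{\beta_k+\alpha}=\partial_jW,$$
which is a generically nonzero Laurent polynomial; already for $\proj^2$ one gets $\psi(D_1-D_3)=z_1-q/(z_1z_2)=\partial_1W\neq0$.

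So the linear relations are killed only after passing to the quotient by $\langle\partial_1W,\ldots,\partial_nW\rangle$, and identifying their images with precisely the generators of that ideal is the content of this step --- it is the reason the Jacobian ring, rather than the full Laurent polynomial ring, is the correct target. Your proposal, by claiming the images vanish outright, omits exactly this computation; replace that step with the displayed identity. The remainder of your argument (the finite check that each quantum Stanley--Reisner relation is respected modulo the Jacobian ideal) is essentially the paper's own sketch.
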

\begin{proof}[Sketch of proof]
First of all, we show that the ideal $\mathcal{L}$ of linear equivalences is mapped to the ideal $\langle\partial_1W,\ldots,\partial_nW\rangle$ by $\psi$. Linear equivalences are generated by the relations $\sum_{i=1}^d v_i^jD_i=0$, $j=1,2$, where we write $v_i=(v_i^1,v_i^2)$ in coordinates. By Proposition \ref{divisor}, we have
\begin{eqnarray*}
\psi(D_i) & = & \sum_{k=1}^d\sum_{\alpha:c_1(\alpha)=0}n(\beta_k+\alpha;D_i)Z_{\beta_k+\alpha}\\
& = & \sum_{k=1}^d\sum_{\alpha:c_1(\alpha)=0}(D_i\cdot(\beta_k+\alpha))n(\beta_k+\alpha)Z_{\beta_k+\alpha}.
\end{eqnarray*}
Hence, we have
\begin{eqnarray*}
\psi\left(\sum_{i=1}^d v_i^jD_i\right) & = & \sum_{i=1}^d v_i^j\left(\sum_{k=1}^d\sum_{\alpha:c_1(\alpha)=0}(D_i\cdot(\beta_k+\alpha))n(\beta_k+\alpha)
Z_{\beta_k+\alpha}\right)\\
& = & \sum_{k=1}^d\sum_{\alpha:c_1(\alpha)=0}\left(\sum_{i=1}^dv_i^j(\delta_{ik}+D_i\cdot\alpha)\right)
n(\beta_k+\alpha;D_i)Z_{\beta_k+\alpha}\\
& = & \sum_{k=1}^d\sum_{\alpha:c_1(\alpha)=0}v_k^jn(\beta_k+\alpha;D_i)Z_{\beta_k+\alpha}\\
& = & \partial_j W.
\end{eqnarray*}

Next, we need to show that each primitive relation is mapped by $\psi$ to a relation in the ideal $\langle\partial_1W,\ldots,\partial_nW\rangle$. This can be done by explicit computations. Again, we illustrate this by an example.

Consider $X$ in the previous example. By Theorem \ref{thmGW}, we can compute the superpotential explicitly. The result is given by
\begin{eqnarray*}
W & = & (1+q_1)z_1+z_2+\frac{q_1q_2q_3^2q_4^3}{z_1z_2}+(1+q_2+q_2q_3)\frac{q_1q_3q_4^2}{z_2}\\
  &   & \qquad+(1+q_3+q_2q_3)\frac{q_1q_4z_1}{z_2}+\frac{q_1z_1^2}{z_2},
\end{eqnarray*}
where $q_l=\exp(-t_l)$, $l=1,\ldots,4$. We can also compute the images of the divisors $D_i$ under $\psi$:
\begin{eqnarray*}
\psi(D_1) & = & (1-q_1)z_1,\\
\psi(D_2) & = & z_2+q_1z_1,\\
\psi(D_3) & = & \frac{q_1q_2q_3^2q_4^3}{z_1z_2}+(q_2+q_2q_3)\frac{q_1q_3q_4^2}{z_2}+\frac{q_1q_2q_3q_4z_1}{z_2},\\
\psi(D_4) & = & (1-q_2)(\frac{q_1q_3q_4^2}{z_2}+\frac{q_1q_3q_4z_1}{z_2}),\\
\psi(D_5) & = & (1-q_3)(\frac{q_1q_4z_1}{z_2}+\frac{q_1q_2q_3q_4^2}{z_2}),\\
\psi(D_6) & = & \frac{q_1z_1^2}{z_2}+q_1z_1+(q_3+q_2q_3)\frac{q_1q_4z_1}{z_2}+\frac{q_1q_2q_3^2q_4^2}{z_2}.
\end{eqnarray*}
Using what we have computed before,
\begin{eqnarray*}
D_2\ast D_4 & = & q_1q_3q_4^2-q_1q_2q_3q_4^2+q_1q_3q_4(D_1+D_5+D_6)\\
&   & \qquad -q_1q_2q_3q_4(D_1+D_4+D_5+D_6)\\
& = & q_1q_3q_4[(1-q_2)(q_4+D_1+D_5+D_6)-q_2D_4].
\end{eqnarray*}
This is mapped by $\psi$ to
\begin{eqnarray*}
&   & q_1q_3q_4[(1-q_2)(q_4+z_1+\frac{q_1z_1^2}{z_2}+(1+q_2q_3)\frac{q_1q_4z_1}{z_2}+\frac{q_1q_2q_3q_4^2}{z_2})\\
&   & \qquad-q_2(1-q_2)(\frac{q_1q_3q_4^2}{z_2}+q_3\frac{q_1q_4z_1}{z_2})]\\
& = & q_1q_3q_4(1-q_2)(q_4+z_1+\frac{q_1z_1^2}{z_2}+\frac{q_1q_4z_1}{z_2}),
\end{eqnarray*}
which is exactly $\psi(D_2)\cdot\psi(D_4)$.

Similarly, we can show that $\psi(\mathcal{SR}_Q)=\{0\}\subset Jac(W)$. Hence, $\psi$ defines a ring homomorphism $\psi:QH^*(X)\to Jac(W)$.
\end{proof}

Corollary \ref{thmQC=JAC} now follows from the following lemma.
\begin{lem}
For generic choices of the K\"ahler parameters $q_l$, $\psi:QH^*(X)\to Jac(W)$ is a bijective map.
\end{lem}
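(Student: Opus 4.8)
The plan is to present $\psi$ as a surjective ring homomorphism between two finite-dimensional $\bC$-algebras of equal dimension, and then deduce that it is an isomorphism. One of the two dimensions is essentially free: since $QH^*(X)$ and $H^*(X;\bC)$ coincide as vector spaces and $X$ is a smooth complete toric surface, $\dim_\bC QH^*(X)=\dim_\bC H^*(X)=d$, the number of rays of $\Sigma$ (equivalently, of torus-fixed points). So everything reduces to two assertions, to be established for generic values of the K\"ahler parameters $q$: (i) $\psi$ is surjective, and (ii) $\dim_\bC Jac(W)=d$.

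For (i) I would use the explicit form of $\psi$ on divisor classes. Combining the divisor equation (Proposition~\ref{divisor}), Theorem~\ref{thmGW}, and the fact that every Maslov index two class contributing to $\psi(D_i)$ is of the form $\beta_k+\alpha$ with $\alpha$ effective and $c_1(\alpha)=0$, one obtains
$$\psi(D_i)=\sum_{k=1}^d c_{ik}(q)\, z^{v_k},\qquad c_{ik}(q)=Q_k\Big(\delta_{ik}+\sum_{\alpha\neq0}(\delta_{ik}+D_i\cdot\alpha)\,n_{\beta_k+\alpha}\,q^\alpha\Big),$$
where $z^{v_k}=\mathrm{Hol}_\conn(\partial\beta_k)$ is the monomial attached to the ray $v_k$ and $Q_k=\exp(-\int_{\beta_k}\omega)\neq 0$. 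Pushing the K\"ahler class deep into the K\"ahler cone sends every $q^\alpha$ with $\alpha\neq0$ effective to $0$, so $\det(c_{ik})$ limits to $\prod_kQ_k\neq0$ and is therefore not identically zero; hence the $d\times d$ matrix $(c_{ik})$ is invertible for generic $q$, which puts the classes of $z^{v_1},\dots,z^{v_d}$ in the image of $\psi$. Since $\Sigma$ is smooth and complete, every lattice vector lies in some two-dimensional cone $\langle v_i,v_{i+1}\rangle$ with $\{v_i,v_{i+1}\}$ a lattice basis, so the $v_i$ generate $\bZ^2$ as a monoid and the $z^{v_i}$ generate $\bC[z_1^{\pm1},z_2^{\pm1}]$ as a $\bC$-algebra; as $\psi$ is a ring homomorphism, it is surjective.

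For (ii), Theorem~\ref{thmGW} shows that $W=\sum_{k=1}^d a_k(q)\,z^{v_k}$ is a Laurent polynomial whose Newton polytope, for generic $q$, equals $\Delta:=\operatorname{conv}\{v_1,\dots,v_d\}$. Because $0$ lies in the interior of $\Delta$ and, $X$ being semi-Fano (so $D_i^2\ge-2$ for all $i$), each $v_i$ lies on $\partial\Delta$, the rays cut $\Delta$ into the $d$ lattice triangles $\operatorname{conv}\{0,v_i,v_{i+1}\}$, each of normalized area $|\det(v_i,v_{i+1})|=1$ by smoothness; hence $2\,\mathrm{Area}(\Delta)=d$. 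For generic $q$, moreover, $W$ is non-degenerate with respect to every proper face of $\Delta$: along a vertex $W$ restricts to a single monomial, and along an edge of primitive direction $u$ it restricts to $z^{p_0}g(z^u)$ for a one-variable polynomial $g$ whose coefficients are generic enough that $g$ has only simple roots, so in either case the restriction has no critical point in $(\cst)^2$. By Kushnirenko's theorem the number of critical points of $W$ in $(\cst)^2$, counted with multiplicity, equals $2\,\mathrm{Area}(\Delta)=d$, i.e. $\dim_\bC Jac(W)=d$.

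Combining (i) and (ii), $\psi$ is a surjective linear map between $\bC$-vector spaces of dimension $d$, hence an isomorphism, for generic $q$. I expect the delicate point to be (ii): the coefficients $a_k(q)$ of $W$ are constrained rather than free, so Kushnirenko's theorem cannot be invoked for arbitrary $q$, and this is exactly why the statement is only claimed for generic $q$. What makes the argument go through is that the non-degeneracy condition involves only the \emph{proper} faces of $\Delta$, on which $W$ degenerates to monomials and to univariate polynomials; the sole genuine requirement is that those univariate polynomials have no repeated root, which fails on a proper subvariety of $q$-space. (Alternatively, one may note that $\psi$ underlies a morphism of free rank-$d$ modules over the ring of K\"ahler parameters, so verifying bijectivity at a single generic $q$ already suffices.)
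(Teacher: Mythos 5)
Your proof is correct and has the same skeleton as the paper's (surjectivity plus a dimension count, then ``surjective between equal finite dimensions implies bijective''), but both halves are executed differently, and in each case your route is more uniform. For surjectivity the paper works example by example: for each surface it exhibits explicit preimages of $z_1,z_2,z_1^{-1},z_2^{-1}$ under $\psi$, using the computed $\psi(D_i)$ and, where convenient, the relations $\partial_jW=0$ in $Jac(W)$. You instead invert the $d\times d$ matrix $(c_{ik})$ for generic $q$ and observe that the monomials $z^{v_k}$ generate $\bC[z_1^{\pm1},z_2^{\pm1}]$ because the ray generators of a smooth complete fan generate $\bZ^2$ as a monoid; this avoids the enumeration of the eleven cases entirely (note that $(c_{ik})$ really can be invertible even though the $D_i$ satisfy two linear relations in $H^2$, since those relations map to $\partial_jW$, which vanish only in the quotient). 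The one imprecision here is that $Q_k$ itself tends to $0$ in the large-volume limit, so you should factor out $\prod_kQ_k$ and argue that $\det(c_{ik})/\prod_kQ_k\to1$, whence the determinant is not identically zero. For the dimension count the paper simply cites Iritani's Proposition 3.7 and Lemma 3.9 (themselves proved via Kouchnirenko); you unwind that citation and verify Kouchnirenko's hypotheses directly, including the nice observation that the semi-Fano condition $D_i^2\geq-2$ is exactly what makes the fan polytope $\Delta$ convex with every $v_i$ on its boundary, giving $2\,\mathrm{Area}(\Delta)=d=\dim H^*(X)$. The only point you assert rather than prove is that the edge polynomials $g$ have simple roots for generic $q$; this is precisely the content of Iritani's Lemma 3.9 and follows, for instance, from the fact that in the large-volume limit the coefficients of $g$ have distinct orders in $q$, so its roots have distinct valuations and are in particular distinct. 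With those two sentences supplied, your argument is a self-contained replacement for the paper's sketch.
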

\begin{proof}[Sketch of proof]
Having computed the superpotential $W$ and the images of the divisors $D_i$ under $\psi$, we can check surjectivity of $\psi$ in a straightforward way. For instance, for the surface $X$ in the previous example, we have
\begin{eqnarray*}
z_1=\psi((1-q_1)^{-1}D_1), z_2=\psi(D_2-q_1(1-q_1)^{-1}D_1),\\
z_2^{-1}=\psi([q_1q_3q_4^2(1-q_2)(1-q_2q_3)]^{-1}D_4-[q_1q_4^2(1-q_3)(1-q_2q_3)]^{-1}D_5).
\end{eqnarray*}
Also, since we have the relation $\partial_1W=0$ which gives
$$z_1^{-1}=(q_1q_2q_3^2q_4^3)^{-1}[(1+q_1)z_1z_2+(1+q_3+q_2q_3)q_1q_4z_1+2q_1z_1^2],$$
and $\psi$ is a homomorphism, $z_1^{-1}$ also lies in the image of $\psi$. The surjectivity of $\psi$ for all other examples can be checked in this way.

On the other hand, by Proposition 3.7 and Lemma 3.9 in Iritani \cite{I} (which were proved by using Kouchnirenko's results), we have $\dim H^*(X)=\dim Jac(W)$ for generic choices of the K\"ahler parameters $q_l$. Hence, $\psi:QH^*(X)\to Jac(W)$ is bijective.
\end{proof}

\section{The big quantum cohomology}\label{comments}

\subsection{The potential with bulk}
For a Lagrangian torus fiber $\mathbf{T}$ in a compact toric manifold $X$ and $\mathbf{b} \in \mathscr{A}$, where $\mathscr{A} := \cpx \langle \textrm{toric invariant cycles} \rangle$, Fukaya, Oh, Ohta and Ono \cite{FOOO2} defined the {\em potential with bulk} $W_{\mathbf{b}}$ as
$$ W_{\mathbf{b}} := \sum_{\substack{\beta \in \pi_2 (X, \mathbf{T})\\ l \geq 0}} \frac{1}{l!} n_l (\beta; \underbrace{\mathbf{b}, \ldots, \mathbf{b}}_l) Z_\beta$$
where the open Gromov-Witten invariants $n(\beta; V_1, \ldots, V_l)$  (see Definition \ref{def_openGW}) extend multilinearly to give a function $n_l: \pi_2 (X, \mathbf{T}) \times \mathscr{A}^{\otimes l} \to \cpx$.  In a recent preprint \cite{FOOO4} they proved that
$$ QH^*_{\mathbf{b}} (X) \cong \mathrm{Jac}(W_{\mathbf{b}}).$$
Thus an explicit expression of $W_{\mathbf{b}}$ would give an explicit presentation of the big quantum cohomology ring $QH^*_{\mathbf{b}} (X)$.

In the previous section, we have given an explicit expression of $W_{\mathbf{b}}$ when $\mathbf{b} = 0$ for a semi-Fano toric surface $X$.  We consider its potential with bulk in this section.  For the purpose of computing $QH^*_{\mathbf{b}} (X)$, it is enough to consider $\mathbf{b} = a X + D + c p$, where $D$ is a toric divisor, $p$ is the intersection point of two toric prime divisors (say $D_1$ and $D_2$), and $a, c \in \cpx$.

\begin{prop} [Restatement of Corollary \ref{bulk_cor}] \label{bulk}
Let $X$ be a semi-Fano toric surface, and $\mathbf{b} = a X + D + c p$ as described above.  Then
$$ W_{\mathbf{b}} = a + \sum_{\beta \not= 0} \exp (\langle \beta, D \rangle) \left(\sum_{k=0}^\infty \frac{c^k}{k!} n_k (\beta; p, \ldots, p) \right) Z_\beta.$$
In particular, when $c = 0$,
$$W_{\mathbf{b}} = a + \sum_{\beta \textrm{ admissible}} \exp(\langle \beta, D \rangle) Z_{\beta}.$$
\end{prop}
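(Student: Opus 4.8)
The plan is to compute $W_{\mathbf{b}}$ directly from its definition $W_{\mathbf{b}} = \sum_{\beta,l} \frac{1}{l!} n_l(\beta;\mathbf{b},\ldots,\mathbf{b}) Z_\beta$ by organizing the sum according to which components of $\mathbf{b} = aX + D + cp$ are inserted at the $l$ interior marked points. Since $n_l$ is multilinear in the insertions, expanding $(\mathbf{b})^{\otimes l}$ gives a sum over the ways of distributing copies of $X$, $D$, and $p$ among the $l$ slots. First I would dispose of insertions of the fundamental cycle $X$: an insertion of $X$ (which has $\mathrm{codim}_\bR = 0$) forces, via the dimension constraint in Definition \ref{def_openGW} ($\mu(\beta) = 2 - 2l + \sum \mathrm{codim}_\bR(V_j)$), that $\mu(\beta) \le 0$, which is impossible for a nonconstant disk class; the only surviving term is the constant disk class $\beta = 0$ with a single insertion of $X$, contributing the constant $a$ (here one uses that $n_1(0; X) = 1$, the fundamental-class/unit normalization). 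For $c = 0$ we then only need the terms involving copies of $D$.

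Next I would handle the insertions of the toric divisor $D$. Because $D$ has real codimension $2$, each insertion of $D$ contributes $2 - 2 = 0$ to the right-hand side of the dimension constraint, so inserting any number $k$ of copies of $D$ (and nothing else) keeps us at $\mu(\beta) = 2$, i.e. exactly the Maslov index two classes handled by Theorem \ref{thmGW}. The key computational input is the divisor equation for open Gromov-Witten invariants, Proposition \ref{divisor}, iterated: $n_k(\beta; D, \ldots, D) = (D \cdot \beta)^k \, n_\beta$. Summing over $k$ with the $\frac{1}{k!}$ and the fact that all $n_\beta \in \{0,1\}$ (by Theorem \ref{thmGW}, $n_\beta = 1$ iff $\beta$ is admissible) yields, for each admissible $\beta$, a factor $\sum_{k \ge 0} \frac{1}{k!}(D\cdot\beta)^k = \exp(D \cdot \beta) = \exp(\langle \beta, D\rangle)$ times $Z_\beta$. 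Collecting everything gives precisely $W_{\mathbf{b}} = a + \sum_{\beta \text{ admissible}} \exp(\langle\beta,D\rangle) Z_\beta$ when $c = 0$, and the general formula follows by also carrying along the (uncomputed in closed form) factor $\sum_k \frac{c^k}{k!} n_k(\beta; p,\ldots,p)$ coming from point insertions, which by multilinearity simply multiplies the coefficient of each $Z_\beta$.

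The main subtlety to check carefully is that mixed insertions do not contribute extra terms beyond the factorized form asserted. Concretely, one must verify that a configuration with $j$ copies of $X$, $k$ copies of $D$, and $i$ copies of $p$ vanishes unless $j = 0$ (for nonconstant $\beta$), using the codimension bookkeeping above, and that the point insertions $p$ (real codimension $4$) combine with the rest consistently with the dimension formula — each $p$ contributes $4 - 2 = 2$ to the constraint, which would force $\mu(\beta)$ to grow, matching the fact that such contributions sit in higher Maslov index and get absorbed into the $n_k(\beta;p,\ldots,p)$ factor via the appropriate moduli space. I expect the genuine obstacle here to be purely bookkeeping: tracking the multinomial coefficients when expanding $(aX + D + cp)^{\otimes l}$ and matching them against the $\frac{1}{l!}$ and the per-type factorials $\frac{1}{j!k!i!}$ so that the sums reassemble into the stated exponential; the geometric content is entirely supplied by Theorem \ref{thmGW}, Proposition \ref{divisor}, and the dimension constraint in Definition \ref{def_openGW}, together with the standard vanishing for fundamental-class insertions.
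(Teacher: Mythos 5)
Your proposal follows essentially the same route as the paper's proof: isolate the constant class $\beta = 0$ (which contributes $a$ via $n_1(0;X)=1$ and the vanishing of all other $n_l(0;\cdots)$), discard fundamental-class insertions for $\beta \neq 0$, expand $(D+cp)^{\otimes l}$ multilinearly, apply the divisor equation of Proposition \ref{divisor} iteratively to pull out $(\langle\beta,D\rangle)^{l-k}$, and resum the double sum into $\exp(\langle\beta,D\rangle)\sum_k \tfrac{c^k}{k!}\,n_k(\beta;p,\ldots,p)$, with the $c=0$ case then reducing to Theorem \ref{thmGW}. The one step where your justification would not go through as written is the claim that configurations with $j\geq 1$ insertions of the fundamental class $X$ are killed by the codimension bookkeeping of Definition \ref{def_openGW}. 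With $j$ copies of $X$, $k$ copies of $D$ and $i$ copies of $p$, the constraint reads $\mu(\beta)=2-2(j+k+i)+2k+4i=2-2j+2i$, so it is satisfied with $\mu(\beta)=2$ whenever $i=j\geq 1$: dimension counting at the level of that formula does \emph{not} force vanishing once point insertions are present (it does suffice in the $c=0$ case, where only $X$ and $D$ occur). The correct reason that $n_l(\beta;[X],\gamma_1,\ldots,\gamma_{l-1})=0$ for $\beta\neq 0$ is the fundamental-class (unit) axiom: the interior marked point carrying the trivial constraint can be forgotten, so the virtual class pushes forward through a space of dimension two less than needed. This is what the paper's phrase ``due to dimension reason'' refers to; with that substitution your argument is complete and coincides with the paper's.
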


\begin{proof}
When $\beta \not= 0$,
$$n_k (\beta; [X], \gamma_1, \ldots, \gamma_{k-1}) = 0$$
for all $k \geq 1$ and $\gamma_1, \ldots, \gamma_{k-1} \in H_* (X)$ due to dimension reason.  Thus
\begin{align*}
W_{\mathbf{b}} &:= \sum_{\substack{\beta \in \pi_2 (X, \mathbf{T})\\ l \geq 0}} \frac{1}{l!} n_l (\beta; \mathbf{b}, \ldots, \mathbf{b}) Z_\beta \\
&= \sum_{l \geq 0} \frac{1}{l!} n_l (0; \mathbf{b}, \ldots, \mathbf{b}) + \sum_{\substack{\beta \not= 0\\ l \geq 0}} \frac{1}{l!} n_l (\beta; D + c p, \ldots, D + c p) Z_\beta.
\end{align*}
Moreover, $n_1 (0; X) = 1$ ($\bM_{1,1}(\mathbf{T},0;X)$ contains the constant map only) and $n_1 (0; p) = n_1 (0; D) = 0$ (the corresponding moduli spaces are empty).  Also by dimension counting, $n_l (0; \gamma_1, \ldots, \gamma_l) = 0$ for all $l \not= 1$.  Thus the first term is
$$\sum_{l \geq 0} \frac{1}{l!} n_l (0; \mathbf{b}, \ldots, \mathbf{b}) = a.$$
Using the divisor equation for open Gromov-Witten invariants (\cite{FOOO2}; see Proposition \ref{divisor}), the second term is
\begin{align*}
\sum_{\substack{\beta \not= 0\\ l \geq 0}} \frac{1}{l!} n_l (\beta; D + c p, \ldots, D + c p) Z_\beta &= \sum_{\substack{\beta \not= 0\\ l \geq 0}} \frac{1}{l!} \sum_{k=0}^l \mathrm{C}^l_k c^k n_l (\beta; \underbrace{D, \ldots, D}_{l-k}, \underbrace{p, \ldots, p}_k) Z_\beta \\
&= \sum_{\substack{\beta \not= 0\\ l \geq 0}} \frac{1}{l!} \sum_{k=0}^l \mathrm{C}^l_k c^k (\langle \beta, D \rangle)^{l-k} n_k (\beta;p, \ldots, p) Z_\beta \\
&= \sum_{\substack{\beta \not= 0\\ j,k \geq 0}} \frac{c^k}{j!k!} (\langle \beta, D \rangle)^{j} n_k (\beta;p, \ldots, p) Z_\beta \\
&= \sum_{\beta \not= 0} \exp (\langle \beta, D \rangle) \left(\sum_{k=0}^\infty \frac{c^k}{k!} n_k (\beta; p, \ldots, p) \right) Z_\beta.
\end{align*}
When $c = 0$,
$$W_{\mathbf{b}} = a + \sum_{\beta \not= 0} \exp (\langle \beta, D \rangle)  n_\beta  Z_\beta.$$
By Theorem \ref{thmGW}, $n_\beta = 1$ when $\beta$ is admissible, and $0$ otherwise.  Thus
$$W_{\mathbf{b}} = a + \sum_{\beta \textrm{ admissible}} \exp(\langle \beta, D \rangle) Z_{\beta}.$$
\end{proof}

\subsection{Speculations and discussions}
In Proposition \ref{bulk}, $n_l(\beta; p, \ldots, p)$ $(l \geq 1)$ has not been computed.  In the following we give an informal discussion concerning these invariants.

One of the issues involved in computing these invariants is the presence of ``ghost bubbles" in the moduli space $\bM_{1,l}(\mathbf{T},\beta;p, \ldots, p)$ (see Figure \ref{ghost}) when $p$ is chosen to be a toric fixed point.  On the other hand, if we consider $p_1, \ldots, p_l \in X$ in generic positions, which is the approach taken by Gross \cite{Gross} where he used tropical geometry to define the superpotential with bulk, the moduli space $\bM_{1,l}(\mathbf{T},\beta;p_1, \ldots, p_l)$ does not involve disk bubbling (when $\beta$ has the suitable Maslov index
$$\mu(\beta) = 2 - 2l + \sum_{j=1}^l \textrm{codim}_\bR(p_j\subset X) = 2 + 2l$$
so that the moduli has expected dimension $n = \dim \mathbf{T}$)\footnote{In general stable discs with Maslov index greater than or equal to two have disc bubblings.  The moduli space $\bM_{1,l}(\mathbf{T})$ has codimension-one boundary and the invariant depends on the perturbation data.  The work of Fukaya-Oh-Ono-Ohta restricted to $T^n$-equivariant perturbations to define the invariants.  On the other hand, if the disc is required to pass through $l$ points in generic positions ($l$ determined by $\mu(\beta) = 2 + 2l$), then it consists of only one disc component and hence disc bubbling cannot occur.  This means $\bM_{1,l}(\mathbf{T},\beta;p_1, \ldots, p_l)$ does not have codimension-one boundary, and so the corresponding invariant is independent of the perturbation data.}.  Since the moduli space $\bM_{1,l}(\mathbf{T},\beta;p_1, \ldots, p_l)$ does not have codimension-one boundary, the invariant $n_l(\beta; p_1, \ldots, p_l)$ are well-defined.  The invariants may become more computable.

\begin{figure}[h]
\begin{center}
\includegraphics{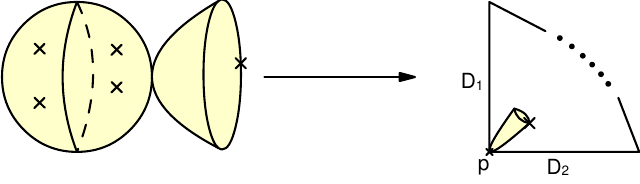}
\end{center}
\caption{Ghost bubbles in $\bM_{1,4}(\mathbf{T},\beta;p, p, p, p)$.  The whole sphere bubble is contracted to the toric fixed point $p$. The disk class is taken such that $\bM_{1,4}(\mathbf{T},\beta;p, p, p, p)$ has expected dimension $2$.  However the actual dimension is bigger than $2$ since the interior marked points are free to move in the bubble.} \label{ghost}
\end{figure}

This motivates us to consider $p' \in D_1$ which is \emph{not} fixed by the torus action, and define the invariant $n_l(\beta; p', \ldots, p')$ by taking a generic perturbation of the $l$ points around $p'$.

\begin{figure}[h]
\begin{center}
\includegraphics{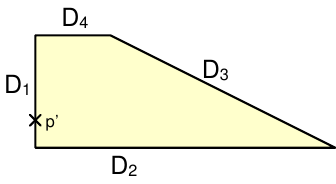}
\end{center}
\caption{The polytope of the Hirzebruch surface $\mathbf{F}_2$.} \label{F_2}
\end{figure}

\begin{eg} [The Hirzebruch surface $\mathbf{F}_2$]
Consider The Hirzebruch surface $\mathbf{F}_2$ whose polytope picture is shown in Figure \ref{F_2}.  If we take the above approach, then
$n_l(\beta; p', \ldots, p')$ equals to $1$ when $\beta = l \beta_1 + \beta_i$ for $i = 2, 3, 4$ or $\beta = l \beta_1 + \beta_4 + D_4$, and $0$ otherwise.  Then for $\mathbf{b} =  a [X] + D + c p$,
\begin{align*}
W_{\mathbf{b}} =& \, a + \sum_{\beta \not= 0} \exp (\langle \beta, D \rangle) \left(\sum_{k=0}^\infty \frac{c^k}{k!} n_k (\beta; p, \ldots, p) \right) Z_\beta \\
=& \, a + \exp (\langle \beta_1, D \rangle) Z_{\beta_1} + \sum_{i=2}^4 \exp (c \, \conste^{\langle \beta_1, D \rangle} Z_{\beta_1}) \exp (\langle \beta_i, D \rangle) Z_{\beta_i} \\
& + \exp (c \, \conste^{\langle \beta_1, D \rangle} Z_{\beta_1}) \exp (\langle \beta_4 + D_4, D \rangle) q_4 Z_{\beta_4}.
\end{align*}
\end{eg}

The above consideration is tentative, and we are still investigating whether this idea is in the right direction.

\appendix
\section{List of superpotentials for semi-Fano toric surfaces}\label{table}
Using the fact that any smooth compact toric surface is a blowup of either $\proj^2$ or a Hirzebruch surface $\bF_m$ ($m\geq0$) at torus fixed points, it is easy to see that there are finitely many isomorphism classes of semi-Fano toric surfaces. In fact, all except $\bF_2$ and $\proj^1\times\proj^1$ are blowups of $\proj^2$; there are 16 of such surfaces, five of which are Fano (namely, $\proj^2$, $\proj^1\times\proj^1$ and the blowup of $\proj^2$ at 1, 2 or 3 points).

By using Theorem \ref{thmGW}, we can compute the superpotentials for all these surfaces explicitly. In this appendix, we provide a list of the superpotentials for the 11 semi-Fano but non-Fano toric surfaces. We enumerate them as $X_1,\ldots,X_{11}$, and each surface is specified by the primitive generators $\rho(\Sigma)$ of rays of its fan and the defining inequalities of its polytope. Also, in the following tables, the $t_l$'s are positive numbers and $q_l=\exp(-t_l)$ are the K\"ahler parameters.\\

\begin{tabular}{|p{0.45cm}|p{2.2cm}|p{3.4cm}|p{5.5cm}|}

\hline

& $\rho(\Sigma)$ & polytope $P$ & superpotential $W$\\

\hline

\multirow{4}{*}{$X_1$}
& $v_1=(1,0)$ & $x_1\geq0$ & \multirow{4}{5.5cm}{$z_1+z_2+\frac{q_1^2q_2}{z_1z_2^2}+(1+q_2)\frac{q_1}{z_2}$}\\
& $v_2=(0,1)$ & $x_2\geq0$ & \\
& $v_3=(-1,-2)$ & $2t_1+t_2-x_1-2x_2\geq0$ & \\
& $v_4=(0,-1)$ & $t_1-x_2\geq0$ & \\

\hline

\multirow{5}{*}{$X_2$}
& $v_1=(1,0)$ & $x_1\geq0$ & \multirow{5}{5.5cm}{$z_1+z_2+\frac{q_1q_2q_3^2}{z_1z_2}+(1+q_2)\frac{q_1q_3}{z_2}+\frac{q_1z_1}{z_2}$}\\
& $v_2=(0,1)$ & $x_2\geq0$ & \\
& $v_3=(-1,-1)$ & $t_1+t_2+2t_3-x_1-x_2\geq0$ & \\
& $v_4=(0,-1)$ & $t_1+t_3-x_2\geq0$ & \\
& $v_5=(1,-1)$ & $t_1+x_1-x_2\geq0$ & \\

\hline

\multirow{6}{*}{$X_3$}
& $v_1=(1,0)$ & $x_1\geq0$ & \multirow{6}{5.5cm}{$(1+q_1)z_1+z_2+\frac{q_1q_2q_3^2q_4^3}{z_1z_2}+(1+q_2+q_2q_3)\frac{q_1q_3q_4^2}{z_2}
+(1+q_3+q_2q_3)\frac{q_1q_4z_1}{z_2}+\frac{q_1z_1^2}{z_2}$}\\
& $v_2=(0,1)$ & $x_2\geq0$ & \\
& $v_3=(-1,-1)$ & $t_1+t_2+2t_3+3t_4-x_1-x_2\geq0$ & \\
& $v_4=(0,-1)$ & $t_1+t_3+2t_4-x_2\geq0$ & \\
& $v_5=(1,-1)$ & $t_1+t_4+x_1-x_2\geq0$ & \\
& $v_6=(2,-1)$ & $t_1+2x_1-x_2\geq0$ & \\

\hline

\multirow{6}{*}{$X_4$}
& $v_1=(1,0)$ & $x_1\geq0$ & \multirow{6}{5.5cm}{$(1+q_1)z_1+z_2+\frac{q_2q_3q_4}{z_1}+\frac{q_1q_3q_4^2}{z_2}+(1+q_3)\frac{q_1q_4z_1}{z_2}
+\frac{q_1z_1^2}{z_2}$}\\
& $v_2=(0,1)$ & $x_2\geq0$ & \\
& $v_3=(-1,0)$ & $t_2+t_3+t_4-x_1\geq0$ & \\
& $v_4=(0,-1)$ & $t_1+t_3+2t_4-x_2\geq0$ & \\
& $v_5=(1,-1)$ & $t_1+t_4+x_1-x_2\geq0$ & \\
& $v_6=(2,-1)$ & $t_1+2x_1-x_2\geq0$ & \\

\hline

\multirow{6}{*}{$X_5$}
& $v_1=(1,0)$ & $x_1\geq0$ & \multirow{6}{5.5cm}{$z_1+z_2+\frac{q_2q_3q_4}{z_1}+\frac{q_1q_3q_4^2}{z_1z_2}+(1+q_3)\frac{q_1q_4}{z_2}
+\frac{q_1z_1}{z_2}$}\\
& $v_2=(0,1)$ & $x_2\geq0$ & \\
& $v_3=(-1,0)$ & $t_2+t_3+t_4-x_1\geq0$ & \\
& $v_4=(-1,-1)$ & $t_1+t_3+2t_4-x_1-x_2\geq0$ & \\
& $v_5=(0,-1)$ & $t_1+t_4-x_2\geq0$ & \\
& $v_6=(1,-1)$ & $t_1+x_1-x_2\geq0$ & \\

\hline

\multirow{7}{*}{$X_6$}
& $v_1=(1,0)$ & $x_1\geq0$ & \multirow{7}{5.5cm}{$(1+q_1)z_1+z_2+\frac{q_2q_3q_4q_5}{z_1}+\frac{q_1q_3q_4^2q_5^3}{z_1z_2}
+(1+q_3+q_3q_4)\frac{q_1q_4q_5^2}{z_2}+(1+q_4+q_3q_4)\frac{q_1q_5z_1}{z_2}+\frac{q_1z_1^2}{z_2}$}\\
& $v_2=(0,1)$ & $x_2\geq0$ & \\
& $v_3=(-1,0)$ & $t_2+t_3+t_4+t_5-x_1\geq0$ & \\
& $v_4=(-1,-1)$ & $t_1+t_3+2t_4+3t_5-x_1-x_2\geq0$ & \\
& $v_5=(0,-1)$ & $t_1+t_4+2t_5-x_2\geq0$ & \\
& $v_6=(1,-1)$ & $t_1+t_5+x_1-x_2\geq0$ & \\
& $v_7=(2,-1)$ & $t_1+2x_1-x_2\geq0$ & \\

\hline

\end{tabular}

\begin{tabular}{|p{0.45cm}|p{2.2cm}|p{3.4cm}|p{5.5cm}|}

\hline

& $\rho(\Sigma)$ & polytope $P$ & superpotential $W$\\

\hline

\multirow{7}{*}{$X_7$}
& $v_1=(1,0)$ & $x_1\geq0$ & \multirow{7}{5.5cm}{$(1+q_1)z_1+z_2+\frac{q_2q_3z_2}{q_1q_5z_1}+\frac{q_3q_4q_5}{z_1}+\frac{q_1q_4q_5^2}{z_2}
+(1+q_4)\frac{q_1q_5z_1}{z_2}+\frac{q_1z_1^2}{z_2}$}\\
& $v_2=(0,1)$ & $x_2\geq0$ & \\
& $v_3=(-1,1)$ & $t_2+t_3-t_1-t_5-x_1+x_2\geq0$ & \\
& $v_4=(-1,0)$ & $t_3+t_4+t_5-x_1\geq0$ & \\
& $v_5=(0,-1)$ & $t_1+t_4+2t_5-x_2\geq0$ & \\
& $v_6=(1,-1)$ & $t_1+t_5+x_1-x_2\geq0$ & \\
& $v_7=(2,-1)$ & $t_1+2x_1-x_2\geq0$ & \\

\hline

\multirow{8}{*}{$X_8$}
& $v_1=(1,0)$ & $x_1\geq0$ & \multirow{8}{5.5cm}{$(1+q_1)z_1+z_2+(1+\frac{q_1q_5q_6^2}{q_2^2q_3})\frac{q_2q_3q_4q_5q_6}{z_1}
+\frac{q_1q_3q_4^2q_5^3q_6^4}{z_1^2z_2}+(1+q_3+q_3q_4+q_3q_4q_5)\frac{q_1q_4q_5^2q_6^3}{z_1z_2}
+(1+q_4+q_3q_4+q_4q_5+q_3q_4q_5+q_3q_4^2q_5)\frac{q_1q_5q_6^2}{z_2}
+(1+q_5+q_4q_5+q_3q_4q_5)\frac{q_1q_6z_1}{z_2}+\frac{q_1z_1^2}{z_2}$}\\
& $v_2=(0,1)$ & $x_2\geq0$ & \\
& $v_3=(-1,0)$ & $t_2+t_3+t_4+t_5+t_6-x_1\geq0$ & \\
& $v_4=(-2,-1)$ & $t_1+t_3+2t_4+3t_5+4t_6-2x_1-x_2\geq0$ & \\
& $v_5=(-1,-1)$ & $t_1+t_4+2t_5+3t_6-x_1-x_2\geq0$ & \\
& $v_6=(0,-1)$ & $t_1+t_5+2t_6-x_2\geq0$ & \\
& $v_7=(1,-1)$ & $t_1+t_6+x_1-x_2\geq0$ & \\
& $v_8=(2,-1)$ & $t_1+2x_1-x_2\geq0$ & \\

\hline

\multirow{8}{*}{$X_9$}
& $v_1=(1,0)$ & $x_1\geq0$ & \multirow{8}{5.5cm}{$(1+q_1)z_1+z_2+\frac{q_2q_3^2q_4z_2}{q_1q_6z_1}+(1+q_2)\frac{q_3q_4q_5q_6}{z_1}
+\frac{q_1q_4q_5^2q_6^3}{z_1z_2}+(1+q_4+q_4q_5)\frac{q_1q_5q_6^2}{z_2}+(1+q_5+q_4q_5)\frac{q_1q_6z_1}{z_2}
+\frac{q_1z_1^2}{z_2}$}\\
& $v_2=(0,1)$ & $x_2\geq0$ & \\
& $v_3=(-1,1)$ & $t_2+2t_3+t_4-t_1-t_6-x_1+x_2\geq0$ & \\
& $v_4=(-1,0)$ & $t_3+t_4+t_5+t_6-x_1\geq0$ & \\
& $v_5=(-1,-1)$ & $t_1+t_4+2t_5+3t_6-x_1-x_2\geq0$ & \\
& $v_6=(0,-1)$ & $t_1+t_5+2t_6-x_2\geq0$ & \\
& $v_7=(1,-1)$ & $t_1+t_6+x_1-x_2\geq0$ & \\
& $v_8=(2,-1)$ & $t_1+2x_1-x_2\geq0$ & \\

\hline

\multirow{8}{*}{$X_{10}$}
& $v_1=(1,0)$ & $x_1\geq0$ & \multirow{8}{5.5cm}{$(1+q_1)z_1+z_2+(1+\frac{q_1q_5q_6^2}{q_2^2q_3})\frac{q_2q_3q_4z_2}{q_1q_6z_1}
+\frac{q_4^2q_5z_2}{q_1q_3z_1^2}+(1+q_3)\frac{q_4q_5q_6}{z_1}+\frac{q_1q_5q_6^2}{z_2}
+(1+q_5)\frac{q_1q_6z_1}{z_2}+\frac{q_1z_1^2}{z_2}$}\\
& $v_2=(0,1)$ & $x_2\geq0$ & \\
& $v_3=(-1,1)$ & $t_2+t_3+t_4-t_1-t_6-x_1+x_2\geq0$ & \\
& $v_4=(-2,1)$ & $2t_4+t_5-t_1-t_3-2x_1+x_2\geq0$ & \\
& $v_5=(-1,0)$ & $t_4+t_5+t_6-x_1\geq0$ & \\
& $v_6=(0,-1)$ & $t_1+t_5+2t_6-x_2\geq0$ & \\
& $v_7=(1,-1)$ & $t_1+t_6+x_1-x_2\geq0$ & \\
& $v_8=(2,-1)$ & $t_1+2x_1-x_2\geq0$ & \\

\hline

\multirow{9}{*}{$X_{11}$}
& $v_1=(1,0)$ & $x_1\geq0$ & \multirow{9}{5.5cm}{$(1+q_1+\frac{q_2q_3^2q_4^3q_5}{q_1q_6q_7^3})z_1+(1+\frac{q_2q_3^2q_4^3q_5}{q_1^2q_6q_7^3}
+\frac{q_2q_3^2q_4^3q_5}{q_1q_6q_7^3})z_2+\frac{q_2q_3^2q_4^3q_5z_2^2}{q_1^2q_6q_7z_1}
+(1+q_2+q_2q_3)\frac{q_3q_4^2q_5z_2}{q_1q_7z_1}+(1+q_3+q_2q_3)\frac{q_4q_5q_6q_7}{z_1}+\frac{q_1q_5q_6^2q_7^3}{z_1z_2}
+(1+q_5+q_5q_6)\frac{q_1q_6q_7^2}{z_2}+(1+q_6+q_5q_6)\frac{q_1q_7z_1}{z_2}+\frac{q_1z_1^2}{z_2}$}\\
& $v_2=(0,1)$ & $x_2\geq0$ & \\
& $v_3=(-1,2)$ & $t_2+2t_3+3t_4+t_5-2t_1-t_6-3t_7-x_1+2x_2\geq0$ & \\
& $v_4=(-1,1)$ & $t_3+2t_4+t_5-t_1-t_7-x_1+x_2\geq0$ & \\
& $v_5=(-1,0)$ & $t_4+t_5+t_6+t_7-x_1\geq0$ & \\
& $v_6=(-1,-1)$ & $t_1+t_5+2t_6+3t_7-x_1-x_2\geq0$ & \\
& $v_7=(0,-1)$ & $t_1+t_6+2t_7-x_2\geq0$ & \\
& $v_8=(1,-1)$ & $t_1+t_7+x_1-x_2\geq0$ & \\
& $v_9=(2,-1)$ & $t_1+2x_1-x_2\geq0$ & \\

\hline

\end{tabular}

\newpage

\begin{figure}[ht]
\begin{center}
\includegraphics{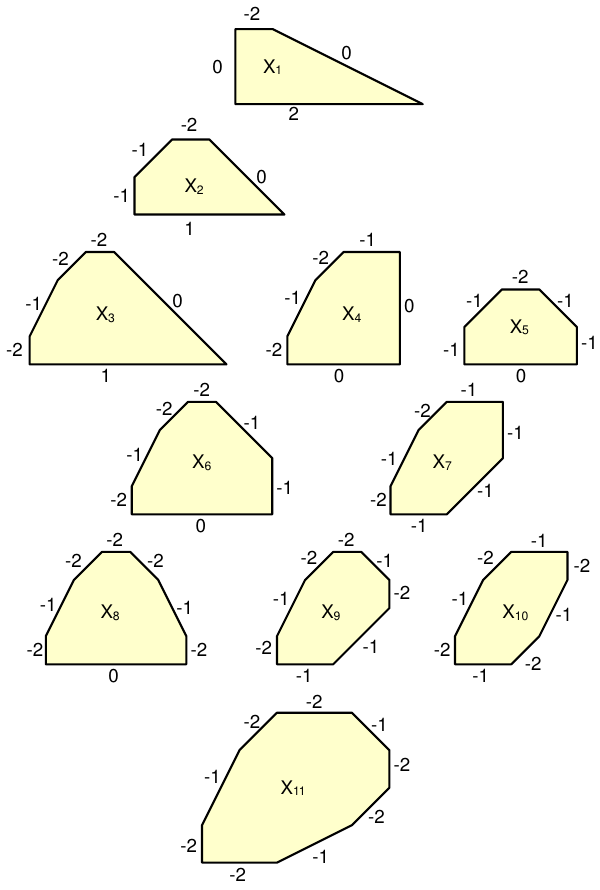}
\end{center}
\caption{Polytopes defining the semi-Fano but non-Fano toric surfaces. The numbers indicate the self-intersection numbers of the toric divisors.}
\end{figure}

\newpage

\bibliographystyle{amsplain}
\bibliography{geometry}

\end{document}